\newtheorem {theorem}    {Theorem}[section]
\newtheorem {lemma}      [theorem]    {Lemma}
\newtheorem {proposition}[theorem]    {Proposition}
\theoremstyle{definition}
\theoremstyle{remark}
\DeclareMathOperator{\SL}{SL}
\DeclareMathOperator{\GL}{GL}
\DeclareMathOperator{\PGL}{PGL}
\DeclareMathOperator{\gl}{\mathfrak{gl}}
\DeclareMathOperator{\Aut}{Aut}
\DeclareMathOperator{\Inn}{Inn}
\DeclareMathOperator{\End}{End}
\DeclareMathOperator{\Exp}{Exp}
\DeclareMathOperator{\Der}{Der}
\DeclareMathOperator{\Ad}{Ad}
\DeclareMathOperator{\ad}{ad}
\DeclareMathOperator{\Id}{Id}
\DeclareMathOperator{\htt}{ht}
\DeclareMathOperator{\re}{{re}}
\DeclareMathOperator{\im}{{im}}
\DeclareMathOperator{\rad}{{rad}}
\DeclareMathOperator{\Hom}{{Hom}}
\DeclareMathOperator{\grHom}{{grHom}}
\DeclareMathOperator{\grEnd}{{grEnd}}
\DeclareMathOperator{\grDer}{{grDer}}
\DeclareMathOperator*{\freeprod}{\raisebox{-0ex}{\scalebox{1.5}{$\Asterisk$}}}
\renewcommand{\a}{\alpha}
\renewcommand{\b}{\beta}
\newcommand{\g}{\gamma}
\newcommand{\F}{{\mathbb F}}
\newcommand{\Z}{{\mathbb Z}}
\newcommand{\R}{{\mathbb R}}
\newcommand{\C}{{\mathbb C}}
\newcommand{\N}{{\mathbb N}}
\newcommand{\Q}{{\mathbb Q}}
\newcommand{\M}{{\mathbb M}}
\begin{document}

\title{Symmetries of Borcherds algebras}
\author{Lisa Carbone}

\begin{abstract} We give an overview of the construction of Borcherds algebras, particularly the Monstrous Lie algebras $\mathfrak m_g$ constructed by Carnahan, where $g$ is an element of the Monster finite simple group. When $g$ is the identity element, $\mathfrak m_g$ is the Monster Lie algebra of Borcherds. We discuss the appearance of the $\mathfrak m_g$ in compactified models of the Heterotic String. We also summarize recent work on associating Lie group analogs to the Lie algebras $\mathfrak m_g$. We include a discussion of some open problems.
\end{abstract}

\thanks{}

\maketitle


\section{Overview}
 Let $\mathbb{M}$ be the Fischer--Griess Monster finite simple group. Let $J(q)$ be the normalized elliptic modular invariant 
$$J(q ) = \sum_{n= -1}^\infty c(n)q^n=\dfrac{1}{q} + 196884q +21493760q^2+864299970q^3+\dots,
$$
where  $q=e^{2\pi{\bf{i}}\tau}$ for $\tau\in\C$ with $\text{Im}(\tau)>0.$
Building upon an earlier conjecture of McKay and Thompson,  Conway and Norton \cite{CN} formulated the Monstrous Moonshine Conjecture which states that  there exists a natural infinite-dimensional $\Z$-graded  $\M$-module  $W=\bigoplus_{n=-1}^\infty W_n$ such that  $\sum_{n=-1}^\infty\dim(W_n)q^n = J(q)$ and  for  every $g \in \M$, the generating function, or  McKay--Thompson series, $\sum_{n=-1}^\infty{\text{\rm tr}}\left(g|_ 
{W_n} \right)q^n, $
is  the Hauptmodul of a genus zero function field arising from a suitable discrete subgroup of $\SL(2,\R)$.

 Frenkel, Lepowsky and Meurman \cite{FLMPNAS} and \cite{FLM} constructed an infinite-dimensional $\Z$-graded $\M$-module, $V^\natural$, whose graded dimension is~$J(q)$.
They constructed  $V^{\natural}$ as a vertex operator algebra, called  the {\it Moonshine Module}  whose group of  automorphisms as a  vertex operator algebra is $\M$.

The Moonshine module $V^{\natural}$  was constructed as the sum of two subspaces $V_+$ and $V_-$,
which are the $+1$ and $-1$ eigenspaces of a particular involution in $\M$. If
an element $g\in\M$ commutes with this involution, then  \cite{FLM} showed that its  McKay--Thompson series $\sum_{n=-1}^\infty{\text{\rm tr}}\left(g|_{V_{n+1}^{\natural}}\right)q^n$ can be determined as the sum of two series given
by its traces on~$V_+$ and~$V_-$,  thus partially settling the Conway-Norton conjecture for $W=V^\natural$. For a detailed history we refer the reader to \cite{FLM}.

 Borcherds  \cite{BoInvent} used the construction of $V^\natural$ by \cite{FLM}  and the Monster Lie algebra $\mathfrak m$, which was discovered as part of a new class of Lie algebras, now known as  Borcherds algebras \cite{BoGKMA}, together with the No-ghost Theorem from String Theory, to prove the remaining cases of the Monstrous Moonshine conjecture.

Prior to the construction of $\mathfrak m$, Borcherds discovered the Fake Monster Lie algebra in the 1980s \cite{BorcherdsICM}, which served as a precursor for proving the Monstrous Moonshine conjecture. Borcherds constructed it using vertex operators associated with a Lorentzian lattice built from the Leech lattice, initially hoping this  might  yield the sought-after algebra connecting the Monster group to modular functions. However, instead of the Monster group, this algebra possessed symmetry related to the Conway group, and its denominator formula was an automorphic product which   specializes to the modular discriminant $\Delta$  rather than the $j$-function. 
The denominator
formula for the Fake Monster Lie algebra motivated Borcherds to construct the 
Monster Lie algebra, which he used to prove the Monstrous Moonshine conjecture \cite{BoInvent}.

Let $V_{{1,1}}$ denote the vertex  algebra  for the even unimodular  2-dimensional Lorentzian lattice  $\textrm{II}_{1,1}$ and let $V=V^\natural\otimes V_{{1,1}}$. Borcherds
constructed  the Monster Lie algebra $\mathfrak m$ in two different ways:

\begin{enumerate} 
\item   $\mathfrak m=P_1/R$  where $P_1$ is  the {\emph{physical space}} of $V$:
$$P_1=\{\psi\in V^\natural\otimes V_{1,1}\mid L(0)\psi=\psi,\ L(j)\psi=0, \ j> 0\}$$
which is the subspace of $V=V^\natural\otimes V_{1,1}$ consisting of primary vectors of weight 1, where the $L(j)$ are the Virasoro generators 
and $R$ denotes the radical of a natural bilinear form on $P_1$. 

\item $\mathfrak g(A)$ is the Lie algebra associated to  $A$,  a certain infinite Borcherds Cartan matrix. Then $\mathfrak m\cong\mathfrak{g}(A)/\mathfrak{z},$ where $\mathfrak{z}$ is the center of~$\mathfrak g(A)$. 
\end{enumerate}

Borcherds \cite{BoInvent} computed a  denominator formula for $ \mathfrak{m} $ and extended the homological methods of Garland and Lepowsky \cite{GarLep} to derive the replication formulas conjectured by Conway and Norton.

Conway and Norton suggested in \cite{CN} that the phenomena leading to Moonshine might be observed in groups other than $\M$. Further evidence for this claim was provided by Queen \cite{Queen}. In particular, Queen decomposed the coefficients of McKay--Thompson series into representations of quotients of subgroups of the Monster in various cases.

Norton then formulated the Generalized Moonshine conjecture  \cite{No,No2}.   To each commuting pair of elements $(g, h)$, the conjecture  proposes a holomorphic function $Z(g, h, \tau)$ (generalized McKay--Thompson series $T_{g,h}(\tau)$) on the upper half-plane which is either a constant or a Hauptmodul,  and which has a number of additional properties. The Moonshine Module $V^\natural$ is replaced   by a family of irreducible $g$-twisted modules $ V^\natural_g$, one  for each $g \in \mathbb{M}$, 
where each $V^\natural_g$ is  a  graded  representation of a central extension
 of the centralizer $C_{\mathbb{M}}(g)$.

The graded dimensions of
$V^\natural_g$
are given by functions $J_g(\tau)$ related to the generalized McKay--
Thompson series $T_g$ (taking $g=h$)
$$T_g(\tau)=\sum_{n=-1}^\infty{\text{\rm Tr}}\left(g\mid_ 
{V^\natural_{g, n+1}} \right)q^n $$ via the involution $\tau\to -1/\tau$ 
$$J_g(\tau)=T_g(-1/\tau).$$
Here $q=e^{2\pi{\bf{i}}\tau}$ and    $\tau\in\C$ with $\text{Im}(\tau)>0.$

Dong, Li and Mason \cite{DLiM}, H\"ohn \cite{Hohn} and Carnahan \cite{CarGMI}, \cite{CarDuke} proved many aspects of the Generalized Moonshine Conjecture.
For example, Dong, Li and Mason \cite{DLM} proved the existence of 
irreducible $g$-twisted modules
$V^\natural_g$, which are  unique up to isomorphism.  Such a module $V^\natural_g$ admits a
canonical projective action of $C_{\M}(g)$.
They  gave a construction of $V^\natural_{2A}$ (the baby Monster) for conjugacy class $2A$\footnote{in the notation of the Atlas of Finite Groups \url{https://brauer.maths.qmul.ac.uk/Atlas/v3/}} of $\M$ (also given by  H\"ohn \cite{Hohn}.

The construction of $g$-twisted $V^\natural$-modules  is a deep problem and involves the work of many people. In \cite{vEMS}, the authors developed aspects of the theory of  cyclic orbifolds, using an application of the Verlinde formula \cite{Hu2008}, the  tensor category theory of \cite{HL}
and the fixed-point regularity results of \cite{CM}.  The notion of twisted modules for lattice vertex operator algebras was first discovered by Frenkel, Lepowsky and Meurman \cite{FLM} and was the first example of what physicists call an orbifold conformal field theory.

\subsection{Monstrous Lie algebras}
 As part of his work on the Generalized Moonshine Conjecture, Carnahan \cite{CarGMI}, \cite{CarDuke}  constructed a family of Lie algebras $\mathfrak m_g$, one 
 for each conjugacy class $[g] \in \mathbb{M}$, associated to the irreducible $g$-twisted modules
$V^\natural_g$.

An element $g\in\mathbb{M}$ is called {\it Fricke} if  the McKay--Thompson series
$$T_{g}(\tau)=\sum_{n=-1}^\infty{\text{\rm Tr}}\left(g\mid_ 
{V^\natural_{n+1}} \right)q^n $$
is invariant
under the level $N$ Fricke involution $\tau \mapsto -1/N\tau$ for some $N\geq 1$. In this case, we say that $\mathfrak m_g$ has {\it Fricke type}, otherwise $\mathfrak m_g$ has {\it non-Fricke type}.

Carnahan showed that the Lie algebras $\mathfrak m_g$ are Borcherds algebras and that the $\mathfrak m_g$ admit a canonical  action of $C_{\M}(g)$ by homogeneous Lie algebra automorphisms. The  Lie algebras $\mathfrak m_g$ have the property that their denominator identities contain an infinite product expansion of the automorphic function arising from the McKay--Thompson series corresponding to the element $g\in\M$.

Carnahan's analysis of the denominator formula for the Lie algebras $\mathfrak m_g$ naturally breaks up into separate consideration of the cases where $\mathfrak m_g$ has Fricke type and  $\mathfrak m_g$ has non-Fricke type \cite{CarDuke}. These cases correspond to Fricke and non-Fricke elements $g\in\M$ \cite{CarFricke}.

The Lie algebras $\mathfrak m_g$ were discovered independently by I. Frenkel \cite{DF}. Duncan and Frenkel described the graded dimensions of Verma modules for the $\mathfrak m_g$ in terms of Rademacher sums.
They also conjectured a natural geometric interpretation of twisted
monstrous Lie algebras $\mathfrak m_g$ in the context of (conjectural) $g$-twisted chiral 3-dimensional quantum gravity theories.

 \subsection{Monstrous Lie algebras in string theory}

The appearance of Borcherds algebras  as symmetries of compactified models of the Heterotic String was first noted by Harvey and Moore (\cite{HM1}, \cite{HM2}).

More recently, Paquette, Persson, and Volpato (\cite{PPV1} and \cite{PPV2}) proposed a  physical interpretation of the Monstrous Moonshine
Conjecture in terms of a certain family of compactifications, labeled by conjugacy classes of $\mathbb{M}$, of the $E_8\times E_8$ Heterotic string to $1 + 1$ dimensions.

For each element $g\in \mathbb M$,   they consider a certain
Heterotic String compactification  to $1+1$ spacetime dimensions.  Each such model has  Carnahan's Monstrous Lie algebra $\frak m_g$ as an algebra
of spontaneously broken gauge symmetries.

In any supersymmetric theory, a BPS-state is a physical state which preserves some of the supersymmetry. In \cite{PPV1}, the authors  introduced a supersymmetric index $Z$ which counts (with signs) the number
of BPS-states. The space of BPS-states contributing to the supersymmetric index is naturally
isomorphic to
 $\bigwedge^\infty\frak m^-$.

It was shown in \cite{HM1}  that BPS states in string theory form an algebra, 
though a complete understanding of the algebra of BPS states of the Heterotic String is still lacking. In the Heterotic String constructions discussed presently, however, \cite{PPV1} and \cite{PPV2} realized the Hilbert space of BPS states $\mathcal H$ as a module for
the Monster Lie algebra $\frak m$ when $g=1$ (the analogous statement for $\frak m_g$ holds when $g \neq 1$), while Harvey and Moore showed that there is an algebraic
structure on $\mathcal H$ itself.

Generalized Moonshine also has an interpretation in physics, proposed by Dixon, Ginsparg and Harvey \cite{DGH}. In  particular, they interpreted the $V^\natural_g$ as twisted sectors of a conformal field theory with $\M$-symmetry and interpreted the functions $Z(g, h, \tau)$ as genus one partition functions.




\section{Borcherds algebras}
Our exposition closely follows \cite{BoInvent}, \cite{JurJPAA}, \cite{JurContempMath} and \cite{JLW}. Let $I$ be a countable index set.  Let $A=(a_{ij})$ with $a_{ij}\in\R$ and $i,j\in I$.
Let
$$X=\{ e_i,\ f_i, \ h_i\mid  i\in I\}$$
where the $e_i$, $f_i$ and $h_i$ are formal symbols. Let $L(X)$ denote the free Lie algebra on $X$. We form the Lie algebra $\mathfrak{g}_0=\mathfrak{g}_0(A)$ over $\C$ by setting
$$\mathfrak{g}_0=L(X)/\mathcal{I}$$
where $\mathcal{I}$ is the ideal generated by the following Lie words

\begin{align*}
 [h_i, h_j]  &\quad\textrm{ for all $i,j\in I$},\\
[h_i,e_j]-a_{ij} e_j &\quad\textrm{ for all $i,j\in I$},\\
[h_i,f_j]+a_{ij} f_j &\quad\textrm{ for all $i,j\in I$},\\
[e_i,f_j]-\delta_{ij}h_i &\quad\textrm{ for all $i,j\in I$}.
\end{align*}
\bigskip

Consider the abelian group $\Z^I$ with pointwise addition of coordinates.
The relations generating $\mathcal{I}$ consist of
homogeneous elements, so $\mathfrak{g}_0=L(X)/\mathcal{I}$ is also a graded $\Z^I$ Lie algebra.

Let $A(X)$ be the free associative algebra on $X$. Then $A(X)$ has a $\Z^I$-grading determined by assigning degrees:
\begin{align*}
&\textrm{$h_i$ has degree $(0,\dots,0,\dots)$,}\\
&\textrm{$e_i$ has degree $(0,\dots,0,1,0,\dots,0,\dots)$,}\\
&\textrm{$f_i$ has degree $(0,\dots,0,-1,0,\dots,0,\dots)$, 
with $\pm 1$ in the $i$-th position for each $i$,}
\end{align*}

 When $A(X)$ is viewed as a Lie algebra under the Lie bracket $[u, v] = uv - vu$, $L(X)$ is the Lie subalgebra of $A(X)$ generated by $X$.
Restricting this grading to $L(X)$, we get a $\Z^I$-grading on $L(X)$:

\begin{align*}
    &\textrm{$\text{deg}[h_i,e_j]=\text{deg}(e_j)$,}\\
&\textrm{$\text{deg}[h_i,f_j]=\text{deg}(f_j)$,}\\
&\textrm{$\text{deg}[e_i,f_i]=\text{deg}(h_i)$,}\\
&\textrm{$\text{deg}[e_i,f_j]_{i\neq j}=(0,\dots,0,1,0,\dots,0,-1,0,\dots)$, with $1$ in the $i$-th position}\\
&\qquad\qquad\qquad\textrm{and $-1$ in the $j$-th position.}
\end{align*}

For $z\in \mathfrak{g}_0(A)$
$$\text{deg}(z+\mathcal{I})=\text{deg}(z)$$
unless $z\in\mathcal{I}$. 
 Let $(n_1,n_2,\dots)\in \Z^I$. Let $L(X)(n_1,n_2,\dots)$ denote the subspace of $L(X)$ with grading~$(n_1,n_2,\dots)$ and let $$\mathfrak{g}_0(n_1,n_2,\dots )=L(X)(n_1,n_2,\dots)/\mathcal{I}\cap L(X)(n_1,n_2,\dots)$$ denote the subspace of $\mathfrak{g}_0$ with grading~$(n_1,n_2,\dots)$. 

For each $i\in I$, we define a linear endomorphism $D_i$ of $L(X)$ where 

\smallskip
{\centerline{$D_i$ acts as the scalar $n_i$ on $L(X)(n_1,n_2\dots)$.}}

Then the $D_i$ are derivations called the {\it degree derivations} of $L(X)$. The $D_i$ are also naturally defined on subspaces $\mathfrak{g}_0(n_1,n_2,\dots)$  of $\mathfrak{g}_0$.

Let $\eta$ be the involution, or automorphism  of order two, on  $L(X)$ which takes 
\begin{align*} 
h_i&\mapsto -h_i,\\
e_i&\mapsto f_i,\\
f_i&\mapsto e_i.
\end{align*}
Then $\eta$ is also well-defined on  $\mathfrak{g}_0$.
We use the same symbols $e_i,\ f_i,\ h_i$ for the images of these elements in $\mathfrak{g}_0$.

Let $\mathfrak h$ be the abelian subalgebra generated by the $h_i$ for $i\in I$. Let $\mathfrak{g}_0^+$ be the Lie subalgebra generated by the $e_i$ for $i\in I$ and let $\mathfrak{g}_0^-$ be the Lie subalgebra generated by the $f_i$ for $i\in I$. 

\begin{theorem}\cite{JurContempMath} The Lie algebra $\mathfrak{g}_0$ has triangular decomposition
$$\mathfrak{g}_0=\mathfrak{g}_0^-\oplus\mathfrak h\oplus \mathfrak{g}_0^+.$$
The abelian subalgebra $\mathfrak h$ has basis consisting of the $h_i$ for $i\in I$. The subalgebras $\mathfrak{g}_0^\pm$ are free Lie algebras generated by the $e_i$, for $i\in I$, respectively $f_i$, for $i\in I$. 
\end{theorem}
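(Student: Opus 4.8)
The plan is to follow the representation-theoretic method of Kac, adapted to the Borcherds setting by Jurisich: exhibit an explicit $\mathfrak{g}_0$-module on which the $f_i$ act by free (left-multiplication) operators, and read off all three assertions from it. First I would obtain the decomposition as a sum of subspaces. Since $\mathfrak{g}_0$ is generated by the $e_i,f_i,h_i$, it suffices to show that $S:=\mathfrak{g}_0^-+\mathfrak{h}+\mathfrak{g}_0^+$ is a Lie subalgebra, for then $S=\mathfrak{g}_0$. The relations give immediately $[\mathfrak{h},\mathfrak{h}]=0$ and $[\mathfrak{h},\mathfrak{g}_0^\pm]\subseteq\mathfrak{g}_0^\pm$, so the only point is $[\mathfrak{g}_0^+,\mathfrak{g}_0^-]\subseteq S$. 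For this I would introduce $T=\{x\in\mathfrak{g}_0:(\ad x)(S)\subseteq S\}$; by the Jacobi identity $T$ is a subalgebra, and a short induction on bracket length using $[e_i,f_j]=\delta_{ij}h_i$ shows each generator lies in $T$, whence $T=\mathfrak{g}_0$ and $S$ is a subalgebra. That the sum $\mathfrak{g}_0^-+\mathfrak{h}+\mathfrak{g}_0^+$ is \emph{direct} is then immediate from the $\Z^I$-grading constructed above: $\mathfrak{g}_0^+$ is supported in degrees lying in $\Z_{\geq 0}^I\setminus\{0\}$, $\mathfrak{g}_0^-$ in $\Z_{\leq 0}^I\setminus\{0\}$, and $\mathfrak{h}$ in degree $0$, and these supports are pairwise disjoint.

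The core of the argument is the construction of the module. Fix a family of scalars $\lambda=(\lambda_i)_{i\in I}$ and let $V=T(U)$ be the free associative algebra on formal symbols $\{u_i:i\in I\}$, with its basis of monomials $u_{j_1}\cdots u_{j_k}$. I would define operators on $V$ by letting $f_i$ act as left multiplication by $u_i$; letting $h_i$ act diagonally, by the scalar $\lambda_i-\sum_{\ell=1}^{k}a_{ij_\ell}$ on the monomial $u_{j_1}\cdots u_{j_k}$; and defining $e_i$ by recursion on length via $e_i\cdot 1=0$ and $e_i\cdot(u_j w)=\delta_{ij}\,(h_i\cdot w)+u_j\,(e_i\cdot w)$, where $h_i\cdot w$ denotes the scalar just described times $w$. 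Since these formulas define the operators on a basis, there is no consistency condition; what must be checked is that they satisfy the four families of defining relations, so that $V$ becomes a $\mathfrak{g}_0$-module $\rho_\lambda$. The relations $[h_i,h_j]=0$ hold because the $h_i$ act diagonally, the weight relations $[h_i,e_j]=a_{ij}e_j$ and $[h_i,f_j]=-a_{ij}f_j$ hold because $f_j$ (resp.\ $e_j$) shifts the $h_i$-eigenvalue by $-a_{ij}$ (resp.\ $+a_{ij}$), and $[e_i,f_j]=\delta_{ij}h_i$ follows directly from the recursion defining $e_i$.

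Finally I would harvest the three conclusions. For freeness, the natural surjection $\mathrm{FreeLie}(\{f_i\})\twoheadrightarrow\mathfrak{g}_0^-$ becomes, after composing with $\rho_\lambda$, the map into the Lie subalgebra of $\mathfrak{gl}(V)$ generated by the left multiplications $L_{u_i}$. The left regular representation of $T(U)$ is faithful and the free Lie algebra on $\{u_i\}$ embeds in the free associative algebra $T(U)$, so this composite is injective; hence the surjection is injective as well, giving an isomorphism $\mathrm{FreeLie}(\{f_i\})\cong\mathfrak{g}_0^-$, and $\mathfrak{g}_0^-$ is free on the $f_i$. Applying the involution $\eta$, which interchanges $e_i$ and $f_i$, yields the corresponding statement for $\mathfrak{g}_0^+$. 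For linear independence of the $h_i$, observe that if $\sum_i c_i h_i=0$ in $\mathfrak{g}_0$, then $\sum_i c_i\rho_\lambda(h_i)$ annihilates the vacuum $1\in V$, giving $\sum_i c_i\lambda_i=0$; since this holds for every choice of $\lambda$, all $c_i$ vanish, so $\{h_i\}$ is a basis of $\mathfrak{h}$. I expect the main obstacle to be the verification that the operators above genuinely satisfy the relations, in particular $[e_i,f_j]=\delta_{ij}h_i$ together with the two weight relations; this is the one genuinely computational step, while everything else is formal.
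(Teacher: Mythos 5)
The paper gives no proof of this theorem, citing \cite{JurContempMath}, and your argument is essentially the standard one used there (Jurisich's adaptation of Kac's Theorem 1.2): the induced $\Z^I$-grading for directness, and the tensor-algebra module with $f_i$ acting by left multiplication, $h_i$ diagonally, and $e_i$ by the length recursion, from which freeness of $\mathfrak{g}_0^\pm$ and linear independence of the $h_i$ are read off exactly as you describe. The proposal is correct as it stands.
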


As in \cite{JLW} and \cite{JurContempMath} we will   work with the extended Borcherds 
algebra $\mathfrak g^e$, that is, the Borcherds algebra with suitable degree derivations
adjoined.  In the extended Lie algebra, the Cartan subalgebra $\mathfrak h$ is  enlarged to make the simple roots
linearly independent and of multiplicity one. The extended Cartan subalgebra is denoted $\mathfrak h^e$.

Since $\mathfrak{g}_0=\mathfrak{g}_0^-\oplus\mathfrak h\oplus \mathfrak{g}_0^+$, we have (\cite{MP}, Section 1.8, Corollary 3)
$$\mathcal{U}(\mathfrak{g}_0)\cong\mathcal{U}(\mathfrak{g}_0^-)\otimes\mathcal{U}(\mathfrak{h})\otimes\mathcal{U}(\mathfrak{g}_0^+)$$
as $\C$-vector spaces.

Recall that the 3-dimensional Heisenberg Lie algebra $\mathcal{H}$
$$\left\{\begin{pmatrix}
 0 & a & c\\
 0 & 0 & b\\
 0 & 0 & 0\\
\end{pmatrix}\mid a,b,c\in\R\right\}$$
is the Lie algebra with basis
$$
X = \begin{pmatrix}
 0 & 1 & 0\\
 0 & 0 & 0\\
 0 & 0 & 0\\
\end{pmatrix},\quad
Y = \begin{pmatrix}
 0 & 0 & 0\\
 0 & 0 & 1\\
 0 & 0 & 0\\
\end{pmatrix},\quad
Z = \begin{pmatrix}
 0 & 0 & 1\\
 0 & 0 & 0\\
 0 & 0 & 0\\
\end{pmatrix}
$$
satisfying relations
$$[X, Y] = Z,\quad [X, Z] = 0,\quad [Y, Z] = 0.$$

\begin{proposition}\label{triple} Let $i\in I$ be fixed and let ${\mathfrak{u}}_{i}$ be the subalgebra of $\mathfrak{g}_0$ spanned by the elements $e_i$, $f_i$, $h_i$. 
\begin{enumerate}
\item Suppose that $a_{ii}\neq 0$.  Let 
$f=\left( \begin{smallmatrix}
 0& 0\\
1 & 0
\end{smallmatrix}
\right), \ h=\left( \begin{smallmatrix}
1 & ~0\\
0 & -1
\end{smallmatrix}\right),\ 
e=\left( \begin{smallmatrix}
 0& 1\\
0 & 0
\end{smallmatrix}
\right)
$
be the usual basis for $\mathfrak{sl}_2(\C)$.  Let $\widehat{e}_i=\dfrac{2e_i}{a_{ii}}$ and $\widehat{h}_i=\dfrac{2h_i}{a_{ii}}$. Then  the map

\begin{align*} 
\varphi:{\mathfrak{u}}_{i}&\to \mathfrak{sl}_2(\C)\\
f_i&\to f,\\
\widehat{h}_i&\to h,\\
\widehat{e}_i&\to e
\end{align*}
is an isomorphism of Lie algebras.

\item Suppose that $a_{ii}=0$. Then  the map

\begin{align*} 
\varphi:{\mathfrak{u}}_{i}&\to \mathcal{H}\\
f_i&\mapsto Y,\\
{h}_i&\mapsto Z,\\
{e}_i&\mapsto X
\end{align*}
is an isomorphism of Lie algebras.
\end{enumerate}
\end{proposition}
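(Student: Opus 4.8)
The plan is to handle both cases by a single template: first read off the bracket relations that hold inside $\mathfrak{u}_i$, then verify that the proposed map $\varphi$ is a bracket-preserving linear bijection. I would begin by extracting, from the defining ideal $\mathcal{I}$, the relations among $e_i$, $f_i$, $h_i$ for the single fixed index. Setting $j=i$ in the generating relations gives $[h_i,e_i]=a_{ii}e_i$, $[h_i,f_i]=-a_{ii}f_i$, and $[e_i,f_i]=\delta_{ii}h_i=h_i$, together with $[h_i,h_i]=0$. These show immediately that the linear span $\mathfrak{u}_i$ of $\{e_i,f_i,h_i\}$ is closed under the bracket, so it is genuinely a subalgebra.

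Next I would verify that $\varphi$ respects brackets in each case, which is a direct substitution. When $a_{ii}\neq 0$, using $\widehat{e}_i=2e_i/a_{ii}$ and $\widehat{h}_i=2h_i/a_{ii}$ one computes $[\widehat{h}_i,\widehat{e}_i]=2\widehat{e}_i$, $[\widehat{h}_i,f_i]=-2f_i$, and $[\widehat{e}_i,f_i]=\widehat{h}_i$, which are exactly the $\mathfrak{sl}_2(\C)$ relations for the basis $e,h,f$. When $a_{ii}=0$, the relations collapse to $[h_i,e_i]=0$, $[h_i,f_i]=0$, $[e_i,f_i]=h_i$; under $e_i\mapsto X$, $f_i\mapsto Y$, $h_i\mapsto Z$ these become $[X,Z]=0$, $[Y,Z]=0$, $[X,Y]=Z$, i.e. the Heisenberg relations. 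Hence $\varphi$ is a Lie algebra homomorphism, and it is surjective by construction.

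It remains to prove injectivity, and this is the one step resting on genuine structural input rather than computation. I would invoke the triangular decomposition $\mathfrak{g}_0=\mathfrak{g}_0^-\oplus\mathfrak{h}\oplus\mathfrak{g}_0^+$ from the earlier theorem: since $\mathfrak{h}$ has the $h_i$ as a basis and $\mathfrak{g}_0^\pm$ are free Lie algebras on the $e_i$, respectively the $f_i$, each of $e_i$, $f_i$, $h_i$ is a nonzero element lying in a distinct summand. A relation $\alpha e_i+\beta h_i+\gamma f_i=0$ therefore forces each component to vanish by directness of the sum, and then $\alpha=\beta=\gamma=0$. Thus $\{e_i,f_i,h_i\}$ is linearly independent and $\dim\mathfrak{u}_i=3$. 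In both cases the target ($\mathfrak{sl}_2(\C)$ or $\mathcal{H}$) is also $3$-dimensional, so the surjective bracket-preserving linear map $\varphi$ is forced to be bijective, hence a Lie algebra isomorphism. The only real obstacle is this linear-independence claim; everything else is mechanical, and the claim is precisely what the triangular decomposition theorem supplies.
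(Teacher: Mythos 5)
Your proposal is correct and follows essentially the same route as the paper: both verify the bracket relations by direct substitution from the defining relations of $\mathfrak{g}_0(A)$ and match them against the $\mathfrak{sl}_2(\C)$ and Heisenberg relations. The one difference is that you explicitly justify injectivity via the linear independence of $e_i$, $f_i$, $h_i$ supplied by the triangular decomposition theorem, a point the paper's proof leaves implicit; this is a welcome addition rather than a divergence in method.
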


\begin{proof} For (1), from the defining relations for ${\mathfrak{g}_0}(A)$ we have
\begin{align*}
[\widehat{h}_i,\widehat{e}_i]&=[\dfrac{2h_i}{a_{ii}},\dfrac{2e_i}{a_{ii}}]=\left(\dfrac{2}{a_{ii}} \right)^2  [h_i,e_i]= \left(\dfrac{2}{a_{ii}} \right)^2a_{ii}e_i=\dfrac{4e_i}{a_{ii}} =2 \widehat{e}_i\\
&\text{so $\varphi([\widehat{h}_i,\widehat{e}_i])=[h,e]=\varphi(2\widehat{e}_i)=2e$;}\\
[\widehat{h}_i,f_i]&=\dfrac{2}{a_{ii}}[h_i,f_i]=-\dfrac{2}{a_{ii}} a_{ii}f_i=-2f_i\\
&\text{so $\varphi([\widehat{h}_i,{f}_i])=[h,f]=\varphi(-2{f}_i)=-2f$;}\\
[\widehat{e}_i,f_i]&=[\dfrac{2e_i}{a_{ii}},f_i]=\dfrac{2}{a_{ii}}[e_i,f_i]=\dfrac{2}{a_{ii}}h_i=\widehat{h}_i\\
&\text{so $\varphi([\widehat{e}_i,{f}_i])=[e,f]=\varphi(\widehat{h}_i)=h$}
\end{align*}
Thus ${\mathfrak{u}}_{i}\cong  \mathfrak{sl}_2(\C)$. 

For (2), we have
\begin{align*}
\varphi([{e}_i,f_i])&=[X,Y]=\varphi(h_i)=Z,\\
\varphi([e_i,h_i])&=[X, Z] =\varphi(a_{ii}e_i)=\varphi(0)=0,\\
\varphi([f_i,h_i])&=[Y, Z] =\varphi(-a_{ii}f_i)=\varphi(0)=0.
\end{align*}
So ${\mathfrak{u}}_{i}\cong  \mathcal{H}$.

\end{proof}

If $a_{ii}\neq 0$, we call  $\{\widehat{e}_i, {f}_i, \widehat{h}_i\}$ an {\it $\frak{sl}_2$-triple}.

If $a_{ii}= 0$, we call ${\mathfrak{u}}_{i}$ is generated by $\{{e}_i, {f}_i, {h}_i\}$ a {\it Heisenberg triple}.


\subsection{Borcherds Cartan matrices}
Let $I=\{1,2,3,\dots\}$. Let $A = (a_{ij})_{i,j \in I}$ be a
matrix with entries in ${\mathbb R}$, satisfying the following conditions:
\begin{enumerate}
\item[(B1)]  $A$ is symmetric.
\item[(B2)]  If $ i\neq j$ then $a_{ij}~\leq~0 $.
\item[(B3)]  If $a_{ii} > 0$ then $\frac{2a_{ij}}{  a_{ii}} \in \mathbb Z $
for all $j  \in I$. 
\end{enumerate}
  Let
$$X=\{ e_i,\ f_i, \ h_i\mid  i\in I\}$$
Let $L(X)$ denote the free Lie algebra on $X$. We form the Lie algebra $\mathfrak{g}_0=\mathfrak{g}_0(A)$ over $\C$ by setting
$$\mathfrak{g}_0=L(X)/\mathcal{I}$$
where $\mathcal{I}$ is the ideal generated by the following Lie words

\begin{align*}
&\textrm{$[h_i, h_j]$,  \quad for all $i,j\in I$,}\\
&\textrm{$[h_i,e_j]-a_{ij} e_j$, \quad for all $i,j\in I$,}\\
&\textrm{$[h_i,f_j]+a_{ij} f_j$, \quad for all $i,j\in I$,}\\
&\textrm{$[e_i,f_j]-\delta_{ij}h_i$, \quad for all $i,j\in I$.}
\end{align*}

The following theorem gives criteria for a Lie algebra to be a Borcherds algebra.

\begin{theorem} \cite{BoInvent, JurJPAA} Let $\mathfrak g$ be a Lie algebra satisfying the
following conditions:
\begin{enumerate}
 \item   $\mathfrak g$ can be $\mathbb Z$-graded as $\bigoplus_{i\in {\mathbb
Z}} \mathfrak g_i$, where $\mathfrak g_i$ is finite dimensional if $i \neq 0$, and
$\mathfrak g$ is diagonalizable with respect to $\mathfrak g_0$. 
\item$\mathfrak g$ has an involution $\omega$ which maps $\mathfrak g_i$
onto $\mathfrak g_{-i}$ and {acts as $-1$} on non-central elements of
$\mathfrak g_0$. In particular, $\mathfrak g_0$ is abelian.
\item  $\mathfrak g$ has a Lie algebra-invariant bilinear form $(\cdot,\cdot)$,
invariant under $\omega$, such that $\mathfrak g_i$ and $\mathfrak g_j$ are
orthogonal if $i \neq -j$, and such that the form $(\cdot,\cdot)_0$,
defined by $(x,y)_0 = -(x, \omega (y))$ for $x,y \in \mathfrak g$, is positive
definite on $\mathfrak g_m $ if $m \neq 0$.
\item$\mathfrak g_0 \subset [\mathfrak g,\mathfrak g]$. 
\end{enumerate}

Then $\mathfrak{g}$ is a Borcherds algebra, that is, there is a Borcherds Cartan matrix $A$ such that $\mathfrak{g}$ is isomorphic to $\mathfrak{g}(A)/\mathfrak{z}$ for some central subalgebra $\mathfrak{z}$. 
\end{theorem}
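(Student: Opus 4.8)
The plan is to reconstruct a Borcherds Cartan matrix directly from the internal data of $\mathfrak{g}$ and then exhibit $\mathfrak{g}$ as a central quotient of the associated $\mathfrak{g}(A)$. First I would use conditions (1) and (2) to produce a root space decomposition. Since $\mathfrak{g}_0$ is abelian and $\mathfrak{g}$ is diagonalizable with respect to it, I set $\mathfrak{h} = \mathfrak{g}_0$ and write $\mathfrak{g} = \bigoplus_{\alpha \in \mathfrak{h}^*} \mathfrak{g}_\alpha$, where $\mathfrak{g}_\alpha = \{x : [h,x] = \alpha(h)x \text{ for all } h \in \mathfrak{h}\}$. The $\mathbb{Z}$-grading refines this: each nonzero root space lies in a single $\mathfrak{g}_i$, hence is finite-dimensional, and a root is declared positive or negative according to the sign of $i$. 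The involution $\omega$ interchanges $\mathfrak{g}_\alpha$ and $\mathfrak{g}_{-\alpha}$, and the form $(\cdot,\cdot)_0$ of condition (3) is positive definite on every $\mathfrak{g}_\alpha$ with $\alpha \neq 0$; this positivity is the engine that drives the whole argument.

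Next I would select simple roots and Chevalley-type generators. Call a positive root \emph{simple} if it is not a sum of two positive roots; finiteness of the graded pieces and induction on the grading show that the simple root spaces generate $\mathfrak{g}^+ = \bigoplus_{i>0}\mathfrak{g}_i$. For each simple root $\alpha$ I choose a basis of $\mathfrak{g}_\alpha$ orthonormal for $(\cdot,\cdot)_0$, and let an index set $I$ enumerate all these basis vectors $e_i$ (so a simple root of multiplicity $m$ contributes $m$ indices, which Borcherds matrices permit). Setting $f_i = -\omega(e_i)$ and $h_i = [e_i, f_i]$, invariance of the form gives $h_i = t_{\alpha_i}$, where $t_\beta \in \mathfrak{h}$ is dual to $\beta$ under the form, and I put $a_{ij} = (\alpha_i, \alpha_j)$, the induced form on simple roots. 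I then verify the Borcherds conditions: (B1) is symmetry of the form; for a real simple root ($a_{ii} > 0$) the triple $\{e_i, f_i, h_i\}$ spans $\mathfrak{sl}_2(\C)$ by Proposition~\ref{triple}, and since $\alpha_j - \alpha_i$ is not a root for $i \neq j$, $e_j$ is a lowest weight vector for this $\mathfrak{sl}_2$. Positive-definiteness of $(\cdot,\cdot)_0$ forces the module generated by $e_j$ to be finite-dimensional, so $2a_{ij}/a_{ii} \in \mathbb{Z}_{\leq 0}$, yielding (B3) and the case of (B2) in which $\alpha_i$ is real; the remaining cases of (B2) follow from positive-definiteness applied to the norm of $[e_i, e_j]$.

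With $A$ in hand I would construct the homomorphism. The chosen generators satisfy the defining relations of $\mathfrak{g}_0(A)$: $[h_i, h_j] = 0$ because $\mathfrak{h}$ is abelian, $[h_i, e_j] = a_{ij}e_j$ and $[h_i, f_j] = -a_{ij}f_j$ by construction, and $[e_i, f_j] = \delta_{ij}h_i$ because for indices attached to the same simple root one has $(e_i, f_j) = (e_i, e_j)_0 = \delta_{ij}$, while for distinct simple roots $\alpha_i - \alpha_j$ is not a root. This gives a homomorphism out of the universal algebra, and the same finiteness and $\mathfrak{sl}_2$-theory that produced (B3) show the Serre relations hold in $\mathfrak{g}$, so the map descends to $\psi : \mathfrak{g}(A) \to \mathfrak{g}$. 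Surjectivity follows since the $e_i$ generate $\mathfrak{g}^+$, the $f_i$ generate $\mathfrak{g}^-$, and condition (4) together with $[\mathfrak{g}_\alpha, \mathfrak{g}_{-\alpha}] \subseteq \mathfrak{h}$ forces $\mathfrak{h}$ into the image.

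The hard part, and the final step, is identifying the kernel $\mathfrak{z} = \ker\psi$ with a central subalgebra. Here I would compare contravariant forms: after matching normalizations, the invariant form on $\mathfrak{g}$ corresponds under $\psi$ to the canonical contravariant form on $\mathfrak{g}(A)$, so $\psi$ is form-preserving. Since $(\cdot,\cdot)_0$ is positive definite, hence nondegenerate, on each nonzero root space of $\mathfrak{g}$, the kernel meets every root space of $\mathfrak{g}(A)$ with $\alpha \neq 0$ trivially; thus $\mathfrak{z}$ lies in the Cartan subalgebra of $\mathfrak{g}(A)$ and in the radical of the contravariant form, which for a Borcherds algebra is exactly its center. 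Therefore $\mathfrak{z}$ is central and $\mathfrak{g} \cong \mathfrak{g}(A)/\mathfrak{z}$. I expect the main obstacle to be precisely this last step: controlling the kernel requires the positive-definiteness of $(\cdot,\cdot)_0$ to interlock with the identification of the radical of the contravariant form on $\mathfrak{g}(A)$ with its center, and some care is needed to ensure the form on $\mathfrak{g}$ genuinely agrees with the canonical contravariant form under $\psi$.
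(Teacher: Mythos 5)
The paper states this theorem only as a citation to Borcherds \cite{BoInvent} and Jurisich \cite{JurJPAA} and gives no proof of its own, so your proposal can only be measured against the argument in those sources. Your overall architecture is the standard one and most of it is sound: a root-space decomposition from (1)--(2), extraction of Chevalley-type generators, verification of (B1)--(B3) via $\mathfrak{sl}_2$-theory and positivity of $(\cdot,\cdot)_0$, a surjection $\psi:\mathfrak{g}(A)\to\mathfrak{g}$, and identification of $\ker\psi$ with a central subalgebra by matching the invariant form with the contravariant form on $\mathfrak{g}(A)$, whose radical away from the Cartan subalgebra vanishes. The last step, which you flag as the hard one, is handled correctly in outline.

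The genuine gap is in your selection of generators. You declare a positive root simple when it is not a sum of two positive roots and assert that the corresponding root spaces generate $\mathfrak{g}^+$ ``by induction on the grading.'' In the generality of the theorem the Cartan subalgebra $\mathfrak{g}_0$ may be very small (for the Monster Lie algebra it is $2$-dimensional), so the simple roots need not be linearly independent in $\mathfrak{g}_0^*$; a root $\alpha=\beta+\gamma$ that does decompose as a sum of two positive roots can still support vectors not lying in $\sum_{\beta+\gamma=\alpha}[\mathfrak{g}_\beta,\mathfrak{g}_\gamma]$, and your recipe would omit the generators needed there, breaking surjectivity of $\psi$. Borcherds' construction avoids this by working degree by degree: for each $i>0$ the new generators are an orthonormal basis, for $(\cdot,\cdot)_0$, of the orthogonal complement of $\sum_{0<j<i}[\mathfrak{g}_j,\mathfrak{g}_{i-j}]$ inside $\mathfrak{g}_i$, whether or not the underlying roots are decomposable; generation is then automatic and positive definiteness guarantees this is a genuine complement. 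The same orthogonality is what actually proves $[e_i,f_j]=0$ for $i\neq j$: your justification that ``$\alpha_i-\alpha_j$ is not a root'' is unavailable when simple roots are not linearly independent in $\mathfrak{g}_0^*$, whereas the orthocomplement property gives $([e_i,f_j],\omega(y))=\pm(e_i,[e_j,y])_0=0$ for all $y$ of the appropriate degree, forcing $[e_i,f_j]=0$ by nondegeneracy. With the generator-selection step repaired in this way, the rest of your argument goes through.
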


   We fix a distinguished subset $I_0 \subset I$ with $i \in I_0$ if $a_{ii} > 0$.
For all $i\neq j$ and $i\in I_0$ we define:
\begin{align*}
d_{ij}^+&=(\ad e_i)^{\frac{-2a_{ij}}{a_{ii}}+1}e_j\in\mathfrak{g}_0^+=\text{Lie}\{e_i\mid i\in I\}\\
d_{ij}^-&=(\ad f_i)^{\frac{-2a_{ij}}{a_{ii}}+1}f_j\in\mathfrak{g}_0^-=\text{Lie}\{f_i\mid i\in I\}.
\end{align*}

Let $\mathfrak{k}_0^+$ be the ideal of $\mathfrak{g}_0^+$ generated by

$$d_{ij}^+ {\text{ for $i\in I_0$, $j\in I$ and }}[e_i,e_j]{\text{ for $i,j\in I$ if }}a_{ij}=a_{ji}=0.$$

Let $\mathfrak{k}_0^-$ be the ideal of $\mathfrak{g}_0^-$ generated by

$$d_{ij}^- {\text{ for $i\in I_0$, $j\in I$ and }}[f_i,f_j]{\text{ for $i,j\in I$ if }}a_{ij}=a_{ji}=0.$$

Then $\mathfrak{k}_0^\pm$ are subalgebras of $\mathfrak{g}_0$. Let $\mathfrak{k}_0=\mathfrak{k}_0^+\oplus \mathfrak{k}_0^-$. The subalgebras $\mathfrak{k}_0^\pm$ and $\mathfrak{k}_0$ are ideals of $\mathfrak{g}_0$ \cite{JurContempMath}.
The Borcherds algebra $\mathfrak{g}=\mathfrak{g}(A)$ is defined to be 
$\mathfrak{g}=\mathfrak{g}_0/\mathfrak{k}_0$ \cite{JurContempMath}. Thus $\mathfrak{g}=\mathfrak{g}(A)$ is the Lie algebra with generators 
$$\{e_i,f_i,h_i\mid i\in I\}$$ and relations

\begin{align*}
&\textrm{$[h_i, h_j]=0$,   for all $i,j\in I$,}\\
&\textrm{$[h_i,e_j]=a_{ij} e_j$,  for all $i,j\in I$,}\\
&\textrm{$[h_i,f_j]=-a_{ij} f_j$,  for all $i,j\in I$,}\\
&\textrm{$[e_i,f_j]=\delta_{ij}h_i$,  for all $i,j\in I$,}\\
&\textrm{$(\ad e_i)^{\frac{-2a_{ij}}{a_{ii}}+1}e_j=0$, for all $i,j\in I$ with $i\neq j$ and $i\in I_0\subset I$,}\\
&\textrm{$(\ad f_i)^{\frac{-2a_{ij}}{a_{ii}}+1}f_j=0$, for all $i,j\in I$ with $i\neq j$ and $i\in I_0\subset I$,}\\
&\textrm{$[e_i,e_j]=0$  if $a_{ij}=a_{ji}=0$, for all $i,j\in I$, }\\
&\textrm{$[f_i,f_j]=0$ if $a_{ij}=a_{ji}=0$, for all $i,j\in I$.}
\end{align*}

 Since the ideal $\mathfrak{k}_0$ is homogeneous, $\mathfrak{g}$ has a
$\Z^I$-gradation induced by the given grading of $\mathfrak{g}_0$. We use the letters $\mathfrak h$ and  $e_i$, $f_i$, $h_i$ to denote their images in $\mathfrak{g}$. The involution $\eta$ sending
 \begin{align*} 
h_i&\mapsto -h_i,\\
e_i&\mapsto f_i,\\
f_i&\mapsto e_i.
\end{align*}
 is well-defined on $\mathfrak{g}$.

Let 
$$\mathfrak{n}^+=\mathfrak{g}_0^+/\mathfrak{k}_0^+,$$
$$\mathfrak{n}^-=\mathfrak{g}_0^-/\mathfrak{k}_0^-.$$

Recall the definition of degree derivations:

\smallskip
{\centerline{$D_i$ acts as the scalar $n_i$ on $\mathfrak{g}_0(n_1,n_2,\dots)$.}}

The $D_i$ are also defined  on subspaces $\mathfrak{g}(n_1,n_2,\dots)$ of $\mathfrak{g}$. Recall that these are the subspaces of $\mathfrak{g}$ with grading~$(n_1,n_2,\dots)\in \Z^I$. These subspaces also have the property that $\mathfrak{g}(n_1,n_2,\dots)=0$  unless only finitely many of the $n_i$ are non-zero. Given any $x\in\mathfrak{g}$ with grading~$(n_1,n_2,\dots)$, only finitely many of the $n_i$ are non-zero. In this case, $\mathfrak{n}^+$ is spanned by elements
$$[e_{i_1},[e_{i_2},[\dots,[e_{i_{r-1}},e_{i_r}]\dots]$$
where $e_j$ occurs $|n_j|$ times and $\mathfrak{n}^-$ is spanned by elements
$$[f_{i_1},[f_{i_2},[\dots,[f_{i_{r-1}},f_{i_r}]\dots]$$
where $f_j$ occurs $|n_j|$ times. The space $\mathfrak{g}(0,\dots,0,\pm 1,0,\dots)$ where $\pm 1$ occurs in the $i$-the position is spanned by $e_i$, respectively $f_i$.

\begin{lemma} (\cite{JurContempMath}) For each $i\in I_0$, $\mathfrak{g}$ is a direct sum of irreducible $\mathfrak{u}_i$-submodules, where $\mathfrak{u}_i=(\mathfrak{sl}_2)_i$.
\end{lemma}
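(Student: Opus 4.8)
The plan is to regard $\mathfrak{g}$, under the adjoint action, as a module over $\mathfrak{u}_i\cong\mathfrak{sl}_2(\C)$ and to verify that it is an \emph{integrable} $\mathfrak{sl}_2$-module, so that the standard complete-reducibility theorem applies. Since $i\in I_0$ forces $a_{ii}>0$, Proposition~\ref{triple}(1) supplies the $\mathfrak{sl}_2$-triple $\{\widehat{e}_i,f_i,\widehat{h}_i\}$ generating $\mathfrak{u}_i$. I will establish three facts: (a) $\ad\widehat{h}_i$ acts semisimply on $\mathfrak{g}$ with integer eigenvalues; (b) $\ad\widehat{e}_i$ and $\ad f_i$ act locally nilpotently on $\mathfrak{g}$; and (c) these two properties force a decomposition into finite-dimensional irreducibles.

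For (a), on the graded piece $\mathfrak{g}(n_1,n_2,\dots)$ the relations $[h_i,e_j]=a_{ij}e_j$ and $[h_i,f_j]=-a_{ij}f_j$ show that $\ad h_i$ acts by the scalar $\sum_j n_j a_{ij}$, so $\ad\widehat{h}_i$ acts by $\sum_j n_j\tfrac{2a_{ij}}{a_{ii}}$, an integer by condition (B3) (using $i\in I_0$). As the $\Z^I$-grading decomposes $\mathfrak{g}$ into these eigenspaces, $\ad\widehat{h}_i$ is semisimple.

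For (b), which is the heart of the argument, I would introduce $V_e=\{x\in\mathfrak{g}\mid(\ad\widehat{e}_i)^N x=0\text{ for some }N\}$ and the analogous $V_f$ defined with $f_i$, and show each is a subalgebra. The subalgebra property follows from the Leibniz expansion of the derivation $D=\ad\widehat{e}_i$: if $D^p x=0$ and $D^q y=0$, then $D^{p+q-1}[x,y]=0$, since in $\sum_k\binom{p+q-1}{k}[D^k x,D^{p+q-1-k}y]$ every term has either $k\geq p$ or $p+q-1-k\geq q$. It then suffices to check that every generator $e_j,f_j,h_j$ lies in $V_e\cap V_f$. For the generators indexed by $i$ this is the elementary computation inside $\mathfrak{u}_i$; for $j\neq i$ the relations $[e_i,f_j]=0$ and $[e_i,h_j]=-a_{ji}e_i$ dispose of most cases, and the one genuinely nontrivial input is the pair of Serre relations $(\ad e_i)^{\frac{-2a_{ij}}{a_{ii}}+1}e_j=0$ and $(\ad f_i)^{\frac{-2a_{ij}}{a_{ii}}+1}f_j=0$, which kill $\ad\widehat{e}_i$ on $e_j$ and, symmetrically, $\ad f_i$ on $f_j$. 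This is precisely where one must invoke the defining relations of the quotient $\mathfrak{g}=\mathfrak{g}_0/\mathfrak{k}_0$ rather than those of $\mathfrak{g}_0$, and it is the main obstacle: the assertion genuinely fails in $\mathfrak{g}_0$, so the whole content is that local nilpotency holds after passing to the Serre quotient.

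Finally, for (c), once $\ad\widehat{e}_i$ and $\ad f_i$ are locally nilpotent and $\ad\widehat{h}_i$ is semisimple, a Poincar\'e--Birkhoff--Witt count shows that the cyclic submodule $\mathcal{U}(\mathfrak{u}_i)\,x$ generated by any $\widehat{h}_i$-eigenvector $x$ is finite dimensional, because $(\ad f_i)^a(\ad\widehat{e}_i)^c x$ vanishes outside a finite range of exponents $a,c$. Hence $\mathfrak{g}$ is a sum of finite-dimensional $\mathfrak{u}_i$-submodules, each completely reducible by Weyl's theorem; a sum of completely reducible modules is completely reducible, so $\mathfrak{g}$ is a direct sum of irreducible $\mathfrak{u}_i$-submodules, as claimed.
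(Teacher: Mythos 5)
Your proof is correct. The paper itself supplies no argument for this lemma---it is quoted directly from \cite{JurContempMath}---and what you have written (semisimplicity of $\ad \widehat{h}_i$ with integer eigenvalues read off the $\Z^I$-grading and condition (B3), local nilpotency of $\ad\widehat{e}_i$ and $\ad f_i$ obtained by showing the locally nilpotent vectors form a subalgebra containing the generators, with the Serre relations of the quotient $\mathfrak{g}=\mathfrak{g}_0/\mathfrak{k}_0$ as the essential input, and then finite-dimensional cyclic submodules plus Weyl's theorem) is precisely the standard integrability argument given in that reference, correctly including the observation that the claim fails in $\mathfrak{g}_0$ before the ideal $\mathfrak{k}_0$ is divided out.
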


\subsection{A symmetric bilinear form}
We define a symmetric bilinear form $(\cdot,\cdot)_Q$ on $Q$
taking values in $\R$  by  
$(\alpha_i, \alpha_j)_Q = a_{ij}$ and extending linearly to any element of  $Q$.  As usual, this form extends to a form on $\mathfrak{h}$ and its dual 
$\mathfrak{h}^{\ast}$ which satisfies the defining condition
 $$ ( \lambda, \alpha_i) = \lambda (h_i) \quad \text{for all $\alpha_i$
and $\lambda \in \mathfrak {h}^{\ast}$}. $$ We fix such a bilinear form. 
This may be extended to a symmetric bilinear form on $(\mathfrak{h}^e)^*$, as in \cite{JurContempMath}. This extension is not unique as it depends on a choice of symmetric bilinear form on $\mathfrak{d}$.

$$([a,b],c)_{\mathfrak g}=-(b,[a,c])_{\mathfrak g}$$

There is also an extension of the symmetric bilinear form on $(\mathfrak{h}^e)^*$ to a unique $\mathfrak g^e$-invariant symmetric bilinear form on $(\mathfrak{g}^e)^*$ (\cite{JurContempMath}, Theorem 1.6).



\subsection{Root system}

Let $\mathfrak{d}_0$ denote the (possibly
infinite-dimensional) space of commuting derivations of $\mathfrak{g}$ spanned by the $D_i$ for $i\in I$. Let $\mathfrak{d}$ be a subspace of $\mathfrak{d}_0$. Form the semi-direct product Lie algebra $\mathfrak{g}^e=\mathfrak{d}\rtimes \mathfrak{g}$. 
Then $\mathfrak{h}^e=\mathfrak{d}\rtimes \mathfrak{h}$ an abelian subalgebra of  $\mathfrak{g}^e$ which acts as scalar multiplication
on each space $\mathfrak{g}(n_1,n_2,\dots)$.

Let $\a_i\in (\mathfrak{h}^e)^*$, $i\in I$, be defined by the conditions:
$$[h,e_i]=\a_i(h)e_i\text{ for all }h\in \mathfrak{h}^e.$$
Then for all $i,j\in I$, we have 
\begin{align*}
\alpha_i(h_j) &= a_{ij}\\
D_i(e_j) &= \delta_{i,j}\\
D_i (f_j) &= -\delta_{i,j}\\
D_i (h) &= 0,\text{ for any $h\in \mathfrak{h}^e$}.
\end{align*}

\begin{lemma}\cite{JurContempMath} For $\mathfrak{d}=\mathfrak{d}_0$, the simple roots $\a_i$ are linearly independent in $(\mathfrak{h}^e)^*$.
\end{lemma}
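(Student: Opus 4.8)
The plan is to show that the simple roots $\alpha_i \in (\mathfrak{h}^e)^*$ separate points in the obvious sense: I would exhibit, for each coefficient in a purported linear dependence, an element of $\mathfrak{h}^e = \mathfrak{d}_0 \rtimes \mathfrak{h}$ on which all but one $\alpha_i$ vanish. Suppose $\sum_{i \in I} c_i \alpha_i = 0$ with only finitely many $c_i$ nonzero, say supported on a finite set $J \subset I$. The key structural fact I would exploit is the pairing recorded just above the statement, namely $D_i(e_j) = \delta_{i,j}$, which says precisely that $\alpha_j(D_i) = \delta_{i,j}$ once we recall that $\alpha_j(h) e_j = [h, e_j]$ for $h \in \mathfrak{h}^e$ and that $D_i$ acts on the grade-$e_j$ space as the scalar $n_j = \delta_{i,j}$.

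First I would make explicit that $\alpha_j$ restricted to $\mathfrak{d}_0$ is computed by $\alpha_j(D_i) = D_i(\text{scalar on }\mathfrak{g}(0,\dots,1,\dots)) = \delta_{i,j}$, using the listed relation $D_i(e_j) = \delta_{i,j}$. Next, evaluating the dependence $\sum_i c_i \alpha_i = 0$ on the derivation $D_k$ for each fixed $k \in I$ gives
\begin{align*}
0 &= \sum_{i \in I} c_i \, \alpha_i(D_k) = \sum_{i \in I} c_i \, \delta_{i,k} = c_k.
\end{align*}
Since this holds for every $k \in I$, every coefficient $c_k$ vanishes, which is exactly linear independence. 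The crucial hypothesis is $\mathfrak{d} = \mathfrak{d}_0$: this guarantees that all the degree derivations $D_k$ are genuinely available as elements of $\mathfrak{h}^e$ on which to evaluate, so that the dual basis $\{D_k\}$ pairs perfectly with $\{\alpha_i\}$.

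The main obstacle, and the point deserving care rather than computation, is the well-definedness of evaluating $\alpha_i$ on the $D_k$ when $I$ is infinite. I would verify that for any single $\alpha_i$ the pairing $\alpha_i(D_k)$ makes sense for all $k$ (it does, being $\delta_{i,k}$, hence zero for all but one $k$), and conversely that each $D_k$ is a bona fide element of $\mathfrak{h}^e$ so that $\alpha_i(D_k)$ is literally a value of the linear functional $\alpha_i$; this is precisely what the hypothesis $\mathfrak{d} = \mathfrak{d}_0$ secures, since $\mathfrak{d}_0$ is spanned by \emph{all} the $D_i$. A secondary subtlety worth a sentence is that the sum $\sum_i c_i \alpha_i$ is finite by assumption, so no convergence question arises when we evaluate on a fixed $D_k$. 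Once these bookkeeping points are settled, the argument is the one-line duality computation above, and the conclusion follows immediately.
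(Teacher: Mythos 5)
Your proof is correct and is essentially the intended argument: the paper states this lemma as a citation to \cite{JurContempMath} without reproducing the proof, but it sets up exactly the relations $D_i(e_j)=\delta_{i,j}$ and $[h,e_i]=\a_i(h)e_i$ beforehand so that the duality computation $\a_j(D_k)=\delta_{j,k}$, applied to a finitely supported dependence relation, kills each coefficient in turn. Your attention to why $\mathfrak{d}=\mathfrak{d}_0$ is needed (so that every $D_k$ is actually available in $\mathfrak{h}^e$) is precisely the point of the hypothesis.
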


For all $\alpha\in (\mathfrak{h}^e)^*$, we define
$$\mathfrak{g}_{\a}=\{x\in\mathfrak{g}\mid [h,x]=\a(x)\text{ for all }h\in \mathfrak{h}^e\}.$$
The root space $\mathfrak{g}_{\a}$ is contained in $\mathfrak{g}(A)$, but can be considered a subset of $\mathfrak{g}^e$
since $\mathfrak{g}_\a \cap \frak h ^e=0$. 

For all $\alpha,\beta\in (\mathfrak{h}^e)^*$
$$[\mathfrak{g}_\a,\mathfrak{g}_\beta]\subset \mathfrak{g}_{\a+\b}.$$

We have $e_i\in \mathfrak{g}_{\a_i}$, $f_i\in \mathfrak{g}_{-\a_i}$ for all $i\in I$. If all $n_i\geq 0$ or if all $n_i\leq 0$ and only finitely many of the $n_i$ are non-zero, then
$$\mathfrak{g}(n_1,n_2,\dots)\subset \mathfrak{g}_{n_1\a_1+n_2\a_2+\dots}.$$
$$\mathfrak{g}(n_1,n_2,\dots)=\mathfrak{g}_{n_1\a_1+n_2\a_2+\dots}$$
and $\mathfrak{g}{(0,0,\dots)}=\mathfrak h$.

The {\it roots} of $\mathfrak{g}$ are the non-zero $\alpha\in (\mathfrak{h}^e)^*$ such that $\mathfrak{g}_\a\neq \{0\}$. Let $\Delta$ be the set of roots. Let $\Delta^+$ be the set of positive roots, that is the roots $n_1\a_1+n_2\a_2+\dots$ for which $n_i\geq 0$ and only finitely many of the $n_i$ are non-zero. Let $\Delta^+$ be the set of negative roots, that is the roots $n_1\a_1+n_2\a_2+\dots$ for which $n_i\leq 0$ and only finitely many of the $n_i$ are non-zero. All roots lie on the root lattice $Q=\bigoplus_{i\in I}\Z\a_i$.

We call $\a\in\Delta$ a \emph{real root} if $(\alpha, \alpha)_Q>0$ and an \emph{imaginary root} if $(\alpha, \alpha)_Q\le 0$.  The set $I_0=\{i \in I\mid a_{ii}=(\alpha_i, \alpha_i )_Q >0\}$
indexes the real simple roots and $I\backslash I_0=\{i\in I\mid a_{ii}=(\alpha_i, \alpha_i )_Q\leq 0\}$ indexes the imaginary simple roots.

The set of real (respectively, imaginary) roots is denoted $\Delta^{\re}$ (respectively, $\Delta^{\im}$); 
we also denote the set of  positive real roots (respectively,  negative real roots) by $\Delta^{\re}_{\pm}$ and the set of  positive imaginary roots (respectively,   negative imaginary roots) by $\Delta^{\im}_{\pm}$.

Elementary properties of Borcherds algebras are summarized 
in the following, which follows from Proposition~1.5 of~\cite{JurContempMath}. 
\begin{proposition} \label{P-all}
The Borcherds algebra $\mathfrak{g}(A)$ has the following properties:
\begin{enumerate}
\item  $\mathfrak{g}_{n\a_i}=0$ unless $n=\pm 1$.
\item $\mathfrak{g}= \frak n^{-} \oplus \frak h \oplus \frak n^{+}$ where $\frak n^+ = \bigoplus_{\a \in \Delta_+} \frak g_\a$
   and $\frak n^-  = \bigoplus_{\a \in \Delta_-} \frak g_\a$.
\item $\left\{h_{i}\mid i\in I \right\}$ is a basis of ${\frak h}$.
\item $\Delta = \Delta_+ \cup \Delta_-$ (disjoint union).
\item $\left[\frak g_\a, \frak g_\b \right] \subset \frak g_{\a+\b}$ for all $\a,\b \in \left(\mathfrak{h}^e \right)^*$.
\item $\eta \left(\frak g_\a \right) = \frak g_{- \a}$ and $\dim \frak g_\a = \dim \frak g_{-\a} < \infty$ for all $\a \in (\frak h^e)^*$.
\item If the (possibly infinite) matrix
$A'$ 
is obtained from the matrix  $A$ by a
permutation of the rows and a corresponding permutation of columns, then
there is a natural isomorphism $\frak g(A) \cong \frak g(A^\prime)$.
\item If $A$ has the form $\begin{pmatrix} B &  0 \\ 0 &  C \end{pmatrix}$, 
there is a 
natural isomorphism $\frak g(A) \cong \frak g(B) \times \frak g(C)$. 
\end{enumerate}
\end{proposition}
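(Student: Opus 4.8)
The plan is to derive all eight properties from the triangular decomposition of $\mathfrak{g}_0$ established in the first Theorem, together with the $\Z^I$-grading and the defining relations, by carefully descending to the quotient $\mathfrak{g} = \mathfrak{g}_0/\mathfrak{k}_0$. The organizing principle is that $\mathfrak{k}_0 = \mathfrak{k}_0^+ \oplus \mathfrak{k}_0^-$ is a homogeneous ideal contained in $\mathfrak{g}_0^+ \oplus \mathfrak{g}_0^-$, so it meets $\mathfrak{h}$ trivially and preserves the sign pattern of the grading. For (2), (3), (4), since $\mathfrak{k}_0^{\pm} \subseteq \mathfrak{g}_0^{\pm}$, quotienting $\mathfrak{g}_0 = \mathfrak{g}_0^- \oplus \mathfrak{h} \oplus \mathfrak{g}_0^+$ by $\mathfrak{k}_0$ yields $\mathfrak{g} = \mathfrak{n}^- \oplus \mathfrak{h} \oplus \mathfrak{n}^+$ with $\mathfrak{n}^{\pm} = \mathfrak{g}_0^{\pm}/\mathfrak{k}_0^{\pm}$; because $\mathfrak{k}_0 \cap \mathfrak{h} = 0$ the images of the $h_i$ remain linearly independent, giving (3). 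The refinement $\mathfrak{n}^+ = \bigoplus_{\a \in \Delta_+} \mathfrak{g}_\a$ then follows once each homogeneous component $\mathfrak{g}(n_1, n_2, \dots)$ with all $n_i \geq 0$ is identified with the root space $\mathfrak{g}_{n_1\a_1 + \cdots}$, and the disjointness in (4) is immediate since a nonzero root cannot have both all coordinates $\geq 0$ and all $\leq 0$.

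The crux — and the step I expect to be the main obstacle — is the exact matching $\mathfrak{g}(n_1, n_2, \dots) = \mathfrak{g}_{n_1\a_1 + n_2\a_2 + \cdots}$ between the $\Z^I$-grading and the eigenspace decomposition under $\mathfrak{h}^e$. The containment $\subseteq$ is clear from $[h, e_i] = \a_i(h) e_i$, but equality requires that distinct gradings produce distinct roots, i.e. the linear independence of the simple roots $\a_i$ in $(\mathfrak{h}^e)^*$. This is precisely why one passes to $\mathfrak{g}^e$ with the full space of degree derivations $\mathfrak{d} = \mathfrak{d}_0$ and invokes the earlier Lemma: without the adjoined derivations, imaginary simple roots can become linearly dependent and the two decompositions need not agree. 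Once injectivity of $(n_1, n_2, \dots) \mapsto \sum_i n_i \a_i$ is in hand, the grading and root decompositions coincide and properties (1), (5), (6) follow cleanly.

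For (1), the matching identifies $\mathfrak{g}_{n\a_i}$ with the component $\mathfrak{g}(0, \dots, n, \dots, 0)$ supported only in position $i$; such a component is spanned by Lie monomials in the single generator $e_i$ (or $f_i$ for $n < 0$), and since the free Lie algebra on one generator is one-dimensional, it vanishes whenever $|n| \geq 2$, forcing $\mathfrak{g}_{n\a_i} = 0$ unless $n = \pm 1$. Property (5) is the already-recorded inclusion $[\mathfrak{g}_\a, \mathfrak{g}_\b] \subseteq \mathfrak{g}_{\a+\b}$, obtained by applying the Jacobi identity to simultaneous eigenvectors of $\mathfrak{h}^e$. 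For (6), I would use that the automorphism $\eta$ negates the $\Z^I$-grading, since it sends $e_i \mapsto f_i$; being a Lie algebra automorphism it carries $\mathfrak{g}_\a$ isomorphically onto $\mathfrak{g}_{-\a}$, giving both $\eta(\mathfrak{g}_\a) = \mathfrak{g}_{-\a}$ and the equality of dimensions, while finiteness $\dim \mathfrak{g}_\a < \infty$ holds because each $\mathfrak{g}(n_1, n_2, \dots)$ is spanned by the finitely many bracket monomials in which $e_j$ (respectively $f_j$) occurs $|n_j|$ times.

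Finally, (7) and (8) are formal. A permutation $\sigma$ of $I$ relabels the generators via $e_i \mapsto e_{\sigma(i)}$, $f_i \mapsto f_{\sigma(i)}$, $h_i \mapsto h_{\sigma(i)}$ and carries the defining relations for $A$ to those for the permuted matrix $A'$, so the induced map is an isomorphism $\mathfrak{g}(A) \cong \mathfrak{g}(A')$. When $A = \begin{pmatrix} B & 0 \\ 0 & C \end{pmatrix}$, the vanishing off-diagonal entries force $[e_i, e_j] = [f_i, f_j] = [e_i, f_j] = 0$ and $[h_i, e_j] = a_{ij} e_j = 0$ whenever $i$ and $j$ lie in different blocks, so the generators of the $B$-block commute with those of the $C$-block; combined with the triangular decomposition this exhibits $\mathfrak{g}(A)$ as $\mathfrak{g}(B) \times \mathfrak{g}(C)$, giving (8).
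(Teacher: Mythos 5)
Your argument is correct and follows essentially the same route as the source the paper relies on (the paper gives no proof of its own, citing Proposition~1.5 of \cite{JurContempMath} instead): everything is reduced to the triangular decomposition of $\mathfrak{g}_0$, the homogeneity of $\mathfrak{k}_0\subseteq\mathfrak{g}_0^+\oplus\mathfrak{g}_0^-$, and the identification of the $\Z^I$-graded components with root spaces via the linear independence of the $\a_i$ in $(\mathfrak{h}^e)^*$. You correctly isolate that linear independence (hence the passage to $\mathfrak{g}^e$ with $\mathfrak{d}=\mathfrak{d}_0$) as the crux, and the remaining points (1), (5)--(8) are handled exactly as in the standard treatment.
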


 \section{The Monster Lie algebra}

 Let $V_{1,1}$ denote the vertex  algebra  for the even unimodular  2-dim Lorentzian lattice $\textrm{II}_{1,1}=\Z\oplus\Z$ equipped with the bilinear form given by the matrix 
$\left(\begin{smallmatrix}  ~0 & -1\\-1 & ~0
 \end{smallmatrix}\right)$. We let $V=V^\natural\otimes V_{1,1}$.
A vector $v\in V$ is called {\it primary}  if it satisfies 
$$L(j)v=0$$ for any $j> 0$, where $L(j)$ are the Virasoro generators. In addition, if
$$L(0)v=nv$$
then $v$ is primary of weight $n$. 
We have $\mathfrak m=P_1/R$  where
$$P_1=\{\psi\in V^\natural\otimes V_{1,1}\mid L(0)\psi=\psi,\ L(j)\psi=0, \ j> 0\},$$
which is the subspace of $V$ of primary vectors of weight 1, $L(j)$ are the Virasoro generators and $R$ denotes the radical of a natural bilinear form on $P_1$. The space $P_1$ is known as the {\it physical space} of $V$, since it contains the vectors in $V$ that correspond to physical states in a conformal field theory. 


We identify the root lattice of $\mathfrak{m}$ with $\textrm{II}_{1,1}$.
Then $\mathfrak{m}$ has root space decomposition (see \cite{BoInvent}, \cite{JLW})
\begin{equation*}\label{mdecomp}\mathfrak{m}=\left(\bigoplus_{m,n<0}\mathfrak{m}_{(m,n)} \oplus \mathfrak{m}_{(-1,1)}\right)\oplus \mathfrak{m}_{(0,0)}\oplus \left(\mathfrak{m}_{(1,-1)}\oplus\bigoplus_{m,n>0}\mathfrak{m}_{(m,n)}\right),\end{equation*}
where $(m,n)\in \textrm{II}_{1,1}$.  Recall \cite{FLM} that  $\M$ acts on $V^\natural$ by vertex operator algebra automorphisms and that $\Aut(V^\natural)$ equals $\M.$
The action of $\M$ on $V^\natural$ induces an action of $\M$ on $\frak m$. In particular, the $\M$-action on $\mathfrak{m}$ gives rise to an $\M$-action on each of the root spaces $\mathfrak{m}_{(1,n)}$ and $ \mathfrak{m}_{(-1,-n)}$ and $\M$ acts trivially on $\frak h=\mathfrak{m}_{(0,0)}$, which is the Cartan subalgebra of $\frak m$  (\cite{BoInvent}, \cite{JLW}).

Borcherds \cite{BoInvent} used the No-ghost Theorem    to obtain an $\M$-module isomorphism between homogeneous subspaces  $\frak m_{(m,n)}$ for $(m,n)\neq (0,0)$ of the Monster Lie algebra and the the homogeneous subspace $V^\natural_{mn+1}$ of $V^\natural$, where $\frak h=\mathfrak{m}_{(0,0)}\cong \mathbb{R}\oplus \mathbb{R}$ is a trivial $\mathbb{M}$-module. %


\subsection{The Monster Lie algebra as $\mathfrak m =\mathfrak g(A) / \mathfrak z$}

As shown in \cite{BoInvent}, the Monster Lie algebra $\mathfrak{m}=P_1/R$ is isomorphic to the  generalized Kac--Moody
 algebra $\mathfrak g(A) / \mathfrak z$,  where $\mathfrak{z}$ is the center of~$\mathfrak g(A)$.
 
In \cite{JurJPAA}, an explicit Borcherds Cartan matrix $A$ was given:
\begin{equation}\label{mdec}%
{A=\smaller \begin{blockarray}{cccccccccc}
 & & \xleftrightarrow{c(-1)}  &   \multicolumn{3}{c}{$\xleftrightarrow{\hspace*{0.7cm}c(1)\hspace*{0.7cm}}$}   &  \multicolumn{3}{c}{$\xleftrightarrow{\hspace*{0.7cm} { c(2)}\hspace*{0.7cm}}$}   & \\
\begin{block}{cc(c|ccc|ccc|c)}
  &\multirow{1}{*}{$c(-1)\updownarrow$} & 2 & 0 & \dots & 0 & -1 & \dots & -1 & \dots \\ \cline{3-10}
    &\multirow{3}{*}{ $ \,\,c(1)\,\,\left\updownarrow\vphantom{\displaystyle\sum_{\substack{i=1\\i=0}}}\right.$}& 0 & -2 & \dots & -2 & -3 & \dots & -3 &   \\
      & & \vdots & \vdots & \ddots & \vdots & \vdots & \ddots & \vdots & \dots  \\
        & & 0 & -2 & \dots & -2 & -3 & \dots & -3 &   \\ \cline{3-10}
         & \multirow{3}{*}{ $\,\,c(2)\,\, \left\updownarrow\vphantom{\displaystyle\sum_{\substack{i=1\\i=0}}}\right.$}  & -1 & -3 & \dots & -3 & -4 & \dots & -4 &   \\
    && \vdots & \vdots & \ddots & \vdots & \vdots & \ddots & \vdots & \dots  \\
        && -1 & -3 & \dots & -3 & -4 & \dots & -4 &   \\ \cline{3-10}
         && \vdots &  & \vdots &  &  & \vdots & \vdots &   \\
\end{block}
\end{blockarray}}\;\;,
\end{equation}
 The numbers $c(j)$ are the coefficients of $q$ in the  modular function 
$$J(q)=j(q) -744=\sum_{i\geq -1}c(j)q^j= \frac1q + 196884 q + 21493760 q^2 + 864299970 q^3  + \cdots$$
so that  $c(-1) = 1$, $c(0) = 0$, $c(1) = 196884$, $\dots$.

As a block matrix, this is:
\begin{equation*}\label{mdec}
{A=\smaller \begin{blockarray}{cccccccccc}
 & &     \multicolumn{3}{c}{}   &  \multicolumn{3}{c}{}   & \multicolumn{3}{c}{}  \\
\begin{block}{cc(c|c|c|c|ccc|c)}
  & & 2_{c(-1)\times c(-1)} & 0_{c(-1)\times c(1)}  & -1_{c(-1)\times c(2)}  & \dots \\ \cline{3-10}
    & & 0_{c(1)\times c(-1)} & -2_{c(1)\times c(1)}   & -3_{c(1)\times c(2)}   &  \dots \\ \cline{3-10}
         & & -1_{c(2)\times c(-1)}  & -3_{c(2)\times c(1)}   & -4_{c(2)\times c(2)}  &  \dots \\ \cline{3-10}
                && \vdots & \vdots & \vdots &   \vdots & &   \\
\end{block}
\end{blockarray}}\;\;,
\end{equation*}
Thus $$ A =  (a_{jk,pq})_{(j,k),(p,q)\in {I}}$$ where $a_{jk,pq} = -(j+p)$.
Thus $\left( a_{jk,pq} \right)$ is a block of size $c(j)\times c(p)$ in block  position $(j,p)$.

We define the index set 
$$I = \left\{(j,k)\mid j\in\{-1,1,2,3,\dots\},\; 1 \leq k\leq c(j) \right\} $$
to reflect the block form of~$A$,
so that
$$ A = \left( a_{jk,pq} \right)_{(j,k),(p,q)\in {I}}$$ where $a_{jk,pq} = -(j+p)$.
Thus $\left( a_{jk,pq} \right)$ is a block of size $c(j)\times c(p)$ in row/column block position $(j,p)$.

Then $\a_{-1}:=\a_{-11}$ is a real root with squared norm $2$, while $\a_{jk}:=\a_{jk,jk} $ is an imaginary root
with squared norm $-2j$ for $(j,k)\in I$ with $j>0$, so for $\N=\{1,2,3,\dots\}$ we define
\begin{align*}
I^{\re}&:= \left\{ (j,k)\in I \mid a_{jk,jk}>0 \right\}=\left\{(-1,1) \right\}, \text{ and} \\
I^{\im} &:= \left\{ (j,k)\in I \mid a_{jk,jk}\le0 \right\}= \left\{(j,k)\mid j\in\N,\; 1 \leq k\leq c(j) \right\}.
\end{align*}

The row space of $A$ is spanned by the two rows indexed by~$(\pm1,1)$.

$$
R_1=(2,0,\dots 0,-1,\dots ,-1,\dots)$$

$$R_2=(0,-2,\dots ,-2,-3,\dots ,-3,\dots)
$$
Then $$(-1/2 R_1+ 3/2 R_2)=(-1,-3,\dots,-3,-4,\dots,-4,\dots).$$
So $A$ has rank 2.

The \emph{Serre--Chevalley generators} of $\mathfrak{g}(A)$ are \cite{JurJPAA}
$e_{jk}$, $f_{jk}$, $h_{jk}$ for all $(j,k)\in  I$, with
 defining relations 
\begin{align*}
\tag{R:1}\label{Rmhh} \left[h_{jk},h_{pq}\right]&=0,\\
\tag{R:2}\label{Rmhe} \left[h_{jk},{e}_{pq} \right]&=a_{jk,pq}{e}_{pq} =-(j+p) {e}_{pq},\\
\tag{R:3}\label{Rmhf} \left[h_{jk},{f}_{pq} \right]&=-a_{jk,pq}{e}_{pq} =(j+p) {f}_{pq},\\
\tag{R:4}\label{Rmef} \left[{e}_{jk},{f}_{pq} \right]&=\delta_{jp}\delta_{kq}h_{jk},\\ 
\tag{R:5}\label{Rmee} \left(\ad {e}_{-1\,1} \right)^j \,{e}_{jk}&= \left(\ad {f}_{-1\,1} \right)^j \,{f}_{jk}=0,
\end{align*}
for all $(j,k),\,(p,q) \in {I}$. 
From here on, we usually write $e_{-1}:={e}_{-1\,1}$ and $f_{-1}:={f}_{-1\,1}$.

The \emph{Cartan subalgebra} of $\mathfrak{g}(A)$ is $\mathfrak{h}_{A}=\sum_{(j,k)\in I} \C h_{jk}$.
The \emph{Cartan involution} $\eta:\mathfrak{g}(A)\to\mathfrak{g}(A)$  acts as $-1$ on $\frak h_A$ and interchanges~$e_{jk}$ and~$f_{jk}$. 

The \emph{Monster Lie algebra} is  $\mathfrak{m}=\mathfrak{g}(A)/\frak{z}$, where $\frak z$ is the center of $\frak g(A)$.
Note that $\mathfrak{z}$ is contained in $\mathfrak{h}_A$ and so the Cartan subalgebra of $\mathfrak{m}$ is 
$ \mathfrak{h}:= \mathfrak{h}_A/\mathfrak{z}$.
The Cartan involution induced on $\mathfrak{m}$ is also denoted by~$\eta$. 
The spaces  $\mathfrak{n}^\pm$  intersect $\mathfrak{h}$ trivially, so
we can identify them with subsets of~$\mathfrak{m}$.
The matrix $A$ has rank~2 and so $\mathfrak{h}$ has dimension~2.

Define the following elements in $\frak m = \frak g(A)/\frak z$:
\begin{align*}
h_1 &:= \frac12(h_{-1\,1}-h_{1\,1})+\mathfrak{z}, & h_2 &:= -\frac12(h_{-1\,1}+h_{1\,1})+\mathfrak{z},\\
e_{-1} &:= {e}_{-1\,1}+\mathfrak{z}, & f_{-1} &:= {f}_{-1\,1}+ \mathfrak{z},
\end{align*}
 and write $e_{jk}$ for $e_{jk}+\mathfrak{z}$ and $f_{jk}$ for ${f}_{jk}+ \mathfrak{z}$, for  all $(j,k)\in  I-\left\{(-1,1) \right\}$. 
The following proposition now gives explicit generators and relations for $\mathfrak{m}=\frak g(A)/\frak z$.
\begin{proposition}\label{P-Monster} \cite{CJM}
The Serre--Chevalley generators of $\mathfrak{m}$ are
$e_{-1}$, $f_{-1}$, $h_{1}$, $h_2$, and $e_{jk}$, $f_{jk}$ for all\newline
$(j,k)\in  I-\left\{(-1,1) \right\}$,
with defining relations:
\begin{align*}
\tag{M:1}\label{Mhh} \left[h_{1},h_2\right]&=0,\\
\tag{M:2a}\label{Mhe-} \left[h_1,e_{-1} \right] &= e_{-1},&              
              \left[h_2, e_{-1} \right] &= -e_{-1},\\
\tag{M:2b}\label{Mhe} \left[h_1,e_{jk} \right]&= e_{jk},&
              \left[h_2, e_{jk} \right] &= j e_{jk},\\
\tag{M:3a} \label{Mhf-} \left[h_1,f_{-1} \right] &= -f_{-1},&
             \left[h_2,f_{-1} \right] &= f_{-1},\\
\tag{M:3b}\label{Mhf} \left[h_1,f_{jk} \right]&= - f_{jk},&
              \left[h_2,f_{jk} \right] &= -j f_{jk},\\
\tag{M:4a}\label{Me-f-} \left[e_{-1},f_{-1} \right]&=h_1-h_2, \\
\tag{M:4b}\label{Mef-} \left[e_{-1},f_{jk} \right]&=0,  & \left[e_{jk},f_{-1} \right]&=0,\\
\tag{M:4c}\label{Mef} \left[e_{jk},f_{pq} \right]&=-\delta_{jp}\delta_{kq} \left(jh_1 + h_2\right),\\ 
\tag{M:5}\label{Mee}
  \left(\ad e_{-1} \right)^j e_{jk}&=0,&\qquad \left(\ad f_{-1} \right)^j f_{jk}&=0,
\end{align*}
for all $(j,k),\,(p,q) \in  I-\left\{(-1,1) \right\}$. Also ${\frak h} = \C h_1\oplus \C h_2 = \mathfrak{h}_A/\mathfrak{z}$, the Cartan subalgebra of~$\mathfrak{m}$.
\end{proposition}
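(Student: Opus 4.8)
The plan is to exhibit mutually inverse homomorphisms between $\mathfrak{m}=\mathfrak{g}(A)/\mathfrak{z}$ and the abstract Lie algebra $\widehat{\mathfrak{m}}$ presented by the generators $e_{-1},f_{-1},h_1,h_2$ and $e_{jk},f_{jk}$ subject to \textup{(M:1)}--\textup{(M:5)}, taking as input the known Serre--Chevalley presentation \textup{(R:1)}--\textup{(R:5)} of $\mathfrak{g}(A)$. The single computation that drives everything is an explicit formula for the images of the Cartan generators. Since $\mathfrak{z}\subseteq\mathfrak{h}_A$, an element $\sum c_{jk}h_{jk}$ lies in $\mathfrak{z}$ exactly when it is annihilated by every root $\alpha_{pq}$; because $a_{jk,pq}=-(j+p)$, this collapses to the two linear conditions $\sum c_{jk}=0$ and $\sum j\,c_{jk}=0$. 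First I would record this description of $\mathfrak{z}$ and invert the defining formulas for $h_1,h_2$ to get $h_{-1\,1}+\mathfrak{z}=h_1-h_2$ and $h_{1\,1}+\mathfrak{z}=-h_1-h_2$, and then deduce the uniform identity $h_{jk}+\mathfrak{z}=-j\,h_1-h_2$ valid for every $(j,k)\in I$.

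With this identity in hand, the forward map $\phi\colon\widehat{\mathfrak{m}}\to\mathfrak{m}$ sending each abstract generator to the identically named element of $\mathfrak{m}$ is shown to be well defined by checking \textup{(M:1)}--\textup{(M:5)} directly against \textup{(R:1)}--\textup{(R:5)}. For instance, \textup{(M:2b)} follows from $[h_{-1\,1},e_{jk}]=(1-j)e_{jk}$ together with $[h_{1\,1},e_{jk}]=-(1+j)e_{jk}$; relation \textup{(M:4c)} is precisely \textup{(R:4)} rewritten through $h_{jk}+\mathfrak{z}=-(jh_1+h_2)$; and \textup{(M:5)} descends verbatim once one observes that the Serre exponent $\tfrac{-2a_{-1\,1,jk}}{a_{-1\,1,-1\,1}}+1$ equals $j$. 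Because $[e_{-1},f_{-1}]=h_1-h_2$ and $[e_{1\,1},f_{1\,1}]=-(h_1+h_2)$ already recover a basis of $\mathfrak{h}$, the listed elements generate $\mathfrak{m}$ and $\phi$ is surjective.

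For the reverse direction, where the real content lies, I would build the inverse out of the full algebra $\mathfrak{g}(A)$ rather than manipulate relations inside the quotient. Define $\psi\colon\mathfrak{g}(A)\to\widehat{\mathfrak{m}}$ on Serre--Chevalley generators by $e_{jk}\mapsto e_{jk}$, $f_{jk}\mapsto f_{jk}$, and the uniform rule $h_{jk}\mapsto -j\,h_1-h_2$ (so $h_{-1\,1}\mapsto h_1-h_2$). The key point is that this rule is consistent with the arithmetic $a_{jk,pq}=-(j+p)$: using \textup{(M:2b)}, together with \textup{(M:2a)} for the $(-1,1)$ column, one gets $[-jh_1-h_2,\,e_{pq}]=(-j-p)e_{pq}=a_{jk,pq}e_{pq}$, so \textup{(R:2)} holds in $\widehat{\mathfrak{m}}$, and \textup{(R:1)}, \textup{(R:3)}, \textup{(R:4)}, \textup{(R:5)} are verified the same way. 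Hence $\psi$ is a well-defined homomorphism, and it annihilates $\mathfrak{z}$ since $\psi\!\left(\sum c_{jk}h_{jk}\right)=-\bigl(\sum j\,c_{jk}\bigr)h_1-\bigl(\sum c_{jk}\bigr)h_2=0$ by exactly the two conditions cutting out $\mathfrak{z}$. Thus $\psi$ factors through a homomorphism $\bar\psi\colon\mathfrak{m}\to\widehat{\mathfrak{m}}$.

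Finally I would verify that $\phi$ and $\bar\psi$ are mutually inverse: both composites fix each generator, e.g. $\bar\psi(h_1)=\tfrac12\bigl(\psi(h_{-1\,1})-\psi(h_{1\,1})\bigr)=\tfrac12\bigl((h_1-h_2)-(-h_1-h_2)\bigr)=h_1$, whence $\phi\circ\bar\psi=\mathrm{id}_{\mathfrak{m}}$ and $\bar\psi\circ\phi=\mathrm{id}_{\widehat{\mathfrak{m}}}$, giving $\mathfrak{m}\cong\widehat{\mathfrak{m}}$. The one genuinely delicate point, and the step I expect to be the main obstacle, is this completeness half: one must guarantee that compressing the infinitely many Cartan generators $h_{jk}$ down to the two generators $h_1,h_2$ loses no relations. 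Phrasing the inverse as a homomorphism $\psi$ emanating from $\mathfrak{g}(A)$, where the presentation is already available, is precisely what makes this rigorous, and it rests entirely on the consistency of the single substitution $h_{jk}\mapsto -j\,h_1-h_2$ with all of \textup{(R:1)}--\textup{(R:5)}.
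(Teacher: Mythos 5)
Your proposal is correct and follows essentially the same route as the paper: the heart of both is the identity $h_{jk}+\mathfrak{z}=-j\,h_1-h_2$, obtained from the description of the center $\mathfrak{z}\subseteq\mathfrak{h}_A$ (your two linear conditions $\sum c_{jk}=0$, $\sum j\,c_{jk}=0$ are equivalent to the paper's spanning set $(p-j)h_{-1\,1}-(p+1)h_{jk}+(j+1)h_{pq}$), after which the relations \textup{(M:1)}--\textup{(M:5)} are direct translations of \textup{(R:1)}--\textup{(R:5)}. You supply in full the mutually-inverse-homomorphisms scaffolding that the paper leaves to the citation \cite{CJM}, and your observation that the completeness half is cleanest when phrased as a map out of $\mathfrak{g}(A)$ killing $\mathfrak{z}$ is exactly the right way to make that step rigorous.
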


\begin{figure}[!ht]
\includegraphics[height=11.0cm]{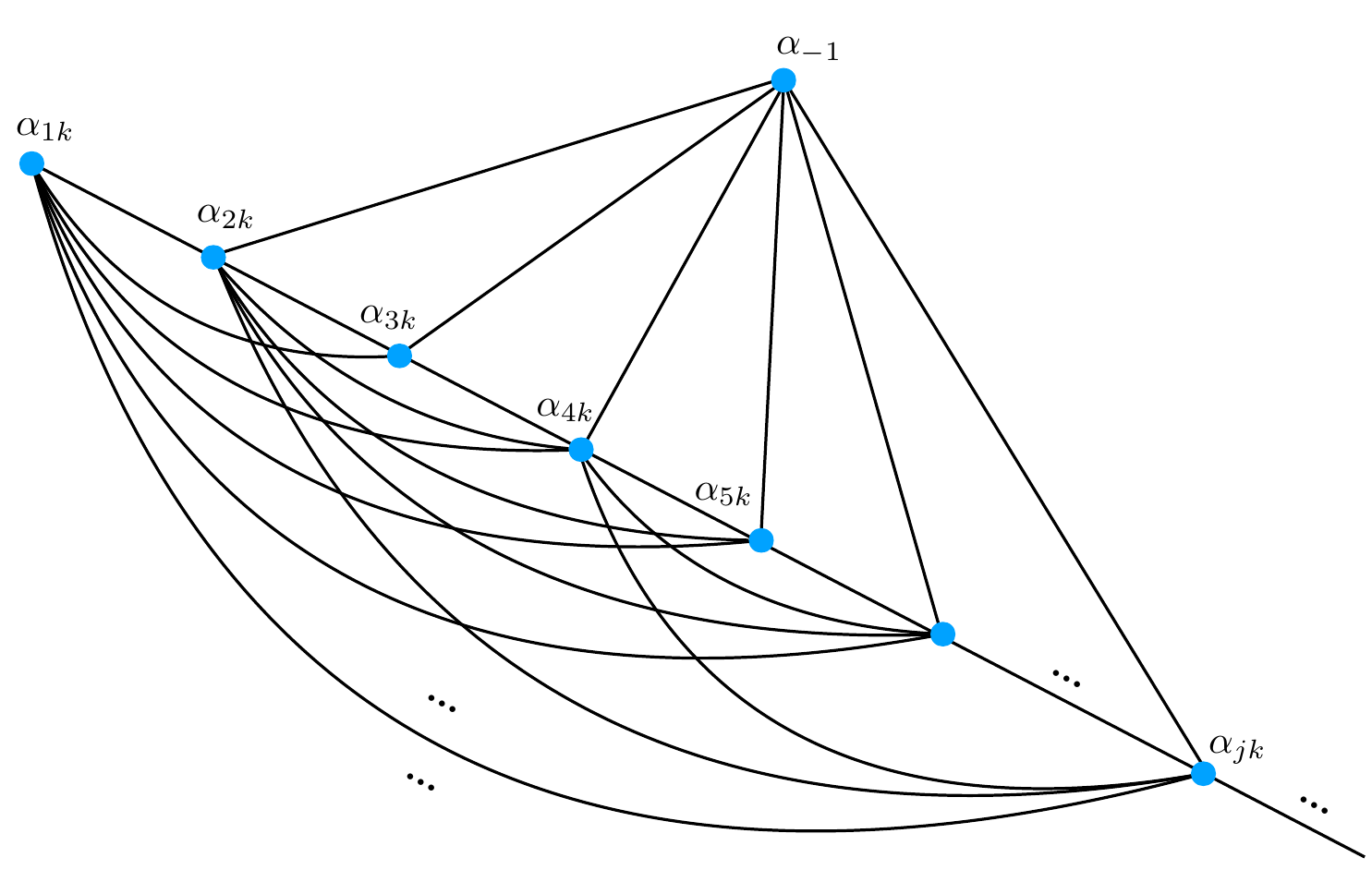} 
\caption{The Dynkin diagram for the Monster Lie algebra $\frak m$ with real simple root $\alpha_{-1}$ and imaginary simple roots $\alpha_{jk}$ is a complete graph on a countable set, missing only one egde, namely the edge between $\alpha_{-1}$ and $\alpha_{1k}$. The multiplicities on the edges are omitted. Edges between $\alpha_{-1}$ and $\alpha_{jk}$ have multiplicity  $|1-j|$ and edges between $\alpha_{jk}$ and $\alpha_{j'k'}$ have multiplicity $j+j'$.  Each vertex $\alpha_{jk}$ represents a complete graph on $c(j)$ vertices $\{\a_{jk}\mid 1\leq k\leq c(j)\}$, where each edge has multiplicity $2j$. 
}
\end{figure}

We have $\mathfrak{m}=\mathfrak{g}(A)/\mathfrak{z}$ and  the center $\mathfrak{z}$ is spanned by
$$(p-j)h_{-1\,1}-(p+1)h_{jk}+(j+1)h_{pq}.$$
So 
$h_{-1\,1}+\mathfrak{z}=h_1-h_2$; 
$\;h_{1\,k} + \mathfrak{z}=-h_1-h_2$ 
for $(1,k)\in I$; and 
\begin{align*}
h_{jk} + \mathfrak{z} &= h_{jk} + \frac{1}{2}\left((1-j)h_{-1\,1}-(1+1)h_{jk}+(j+1)h_{1\,1} \right) +\mathfrak{z} \\
&=\frac{1-j}{2}\,h_{-1\,1} +\frac{1+j}{2}\, h_{1\,1} +\mathfrak{z}\\
& = \frac12\left(h_{-1\,1}+h_{1\,1} \right) + \frac{j}2\left(-h_{-1\,1}+h_{1\,1} \right)+{\frak z}\\
&=-jh_1-h_2
\end{align*}
when $j\ne 1$. Hence $h_1$ and $h_2$ span ${\frak h}$. Moreover $h_1$ and $h_2$ are linearly independent by \eqref{Mhe-} and \eqref{Mhe}.
\begin{figure}[!ht]
\begin{center}
\includegraphics[height=14.0cm]{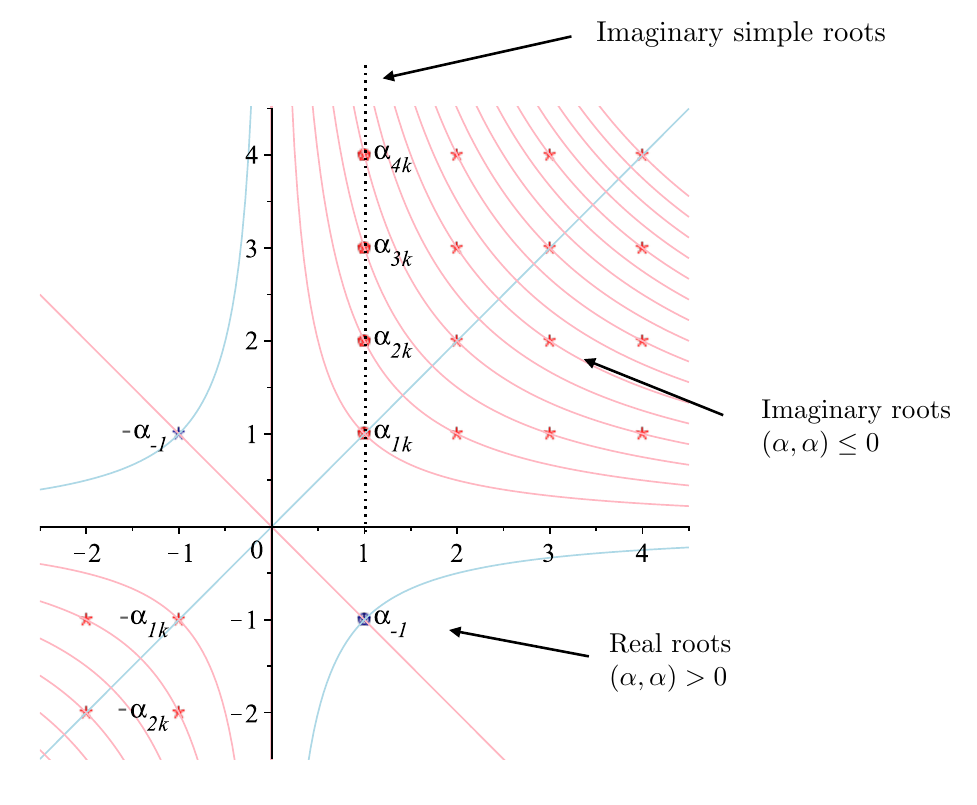} 
\caption{Root lattice of the Monster Lie algebra $\mathfrak m$}\label{F-rts}
\end{center}
 \end{figure}

\section{Decomposition of  $\mathfrak m$}

In \cite{JurJPAA}, Jurisich obtained a decomposition of the Monster Lie algebra  into two free subalgebras $ \mathfrak{u}^\pm $ and a copy of $ \mathfrak{gl}_2 \cong \mathfrak{sl}_2 \oplus \mathbb{C}h $,
where  $\mathfrak{u}^\pm$ are free Lie algebras generated by countably many elements of the form
$$ e_{\ell,jk}:= \left(\ad e_{-1} \right)^\ell e_{jk}\quad\text{and}\quad f_{\ell,jk}:= \left(\ad f_{-1} \right)^\ell f_{jk}
$$
respectively. The real simple root vectors $e_{-1}$, $f_{-1}$ and the imaginary simple  root vectors $e_{jk}$, $f_{jk}$ (which correspond to the case $\ell=0$ in the notation for the free subalgebras $\mathfrak u^\pm$) are the Chevalley generators of $\mathfrak m$.

Furthermore, the  free subalgebras $\mathfrak{u}^\pm$ are freely generated by $ \mathfrak{gl}_2 $-modules, given by the adjoint action. The proof of the existence of such a decomposition uses the explicit form of the Borcherds Cartan matrix for $\mathfrak m$, also given in \cite{JurJPAA}. 

This decomposition led to a 
simplification in \cite{JLW} of Borcherds' proof of the Monstrous Moonshine Conjecture, particularly in the proof of the denominator identity for $\mathfrak m$.
In particular, the decomposition simplified the computation of the needed semi-infinite homology of $ \mathfrak{u}^+ $, and hence the character recursions.
Together with $ \mathfrak{sl}_2 $-representation theory, this allowed \cite{JLW} to efficiently compute the  denominator formula for the Monster Lie algebra.


\section{Monstrous Lie algebras}

\subsection{Monstrous Lie algebras of  Fricke type}
An element $g\in\mathbb{M}$ is called {\it Fricke} if  the McKay--Thompson series $T_{g}(\tau)$ is invariant
under the level $N$ Fricke involution $\tau \mapsto -1/N\tau$ for some $N\geq 1$. Otherwise $g$ is called non-Fricke. There are 141 Fricke classes, and 53 non-Fricke classes in $\M$. When $g$ is Fricke, $\mathfrak m_g$ has similar structure to the
Monster Lie algebra $\mathfrak m$.

The Lie algebra $\mathfrak m_g$, for $g$ of Fricke type, has denominator formula \cite{CarDuke}:
$$f(p) - f(q^{1/N}) = p^{-1} \prod_{m >0, \; n \in \frac{1}{N}\Z} (1-p^m q^n)^{c(m,n)}$$ where $$f(q) = q^{-1} + \sum_{n=1}^\infty c(1,\frac{n}{N}) q^n.$$
Note that in \cite{CarFricke}, the denominator formula given in Proposition 3.4 has  a different variable normalization and a $\Z\times\Z$-grading instead of a $\Z\times\frac{1}{N}\Z$-grading. The present exposition uses the $\Z\times\frac{1}{N}\Z$-grading.



The exponents $c(m,\frac{n}{N})$ are the multiplicities of roots $(m,\frac{n}{N})\in\Z\oplus \frac{1}{N}\Z$. 
They are  the coefficients of a discrete Fourier transform of the generalized McKay--Thompson series relative to the cyclic group $\langle g\rangle$ \cite{CarDuke}.

For $g$ of Fricke type, we have $\mathfrak m_g=\mathfrak{g}(A_g)/\mathfrak z$ where $\mathfrak z$ is the center of $ \mathfrak{g}(A_g)$ and $A_g$ is the Borcherds Cartan matrix (Carnahan):

\begin{equation*}
{A_g=\smaller \begin{blockarray}{cccccccccc}
 & & \xleftrightarrow{c(1,\frac{-1}{N})}  &   \multicolumn{3}{c}{$\xleftrightarrow{\hspace*{0.7cm}c(1,\frac{1}{N})\hspace*{0.7cm}}$}   &  \multicolumn{3}{c}{$\xleftrightarrow{\hspace*{0.7cm} { c(1,\frac{2}{N})}\hspace*{0.7cm}}$}   & \\
\begin{block}{cc(c|ccc|ccc|c)}
  &\multirow{1}{*}{$c(1,\frac{-1}{N})\updownarrow$} & 2 & 0 & \dots & 0 & -1 & \dots & -1 & \dots \\ \cline{3-10}
    &\multirow{3}{*}{ $ \,\,c(1,\frac{1}{N})\,\,\left\updownarrow\vphantom{\displaystyle\sum_{\substack{i=1\\i=0}}}\right.$}& 0 & -2 & \dots & -2 & -3 & \dots & -3 &   \\
      & & \vdots & \vdots & \ddots & \vdots & \vdots & \ddots & \vdots & \dots  \\
        & & 0 & -2 & \dots & -2 & -3 & \dots & -3 &   \\ \cline{3-10}
         & \multirow{3}{*}{ $\,\,c(1,\frac{2}{N})\,\, \left\updownarrow\vphantom{\displaystyle\sum_{\substack{i=1\\i=0}}}\right.$}  & -1 & -3 & \dots & -3 & -4 & \dots & -4 &   \\
    && \vdots & \vdots & \ddots & \vdots & \vdots & \ddots & \vdots & \dots  \\
        && -1 & -3 & \dots & -3 & -4 & \dots & -4 &   \\ \cline{3-10}
         && \vdots &  & \vdots &  &  & \vdots & \vdots &   \\
\end{block}
\end{blockarray}}\;\;,
\end{equation*}



Let $$I = \left\{(j,k)\mid j\in\{-1,1,2,3,\dots\},\; 1 \leq k\leq c(1,j/N) \right\}. $$

For $(j,k)\in {I}$, the entries satisfy
$$ (\a_{jk},\a_{pq})=a_{jk,pq} = -(j+p). $$

In the case where $g$ is Fricke, Carnahan \cite{CarFricke} proved an analog for $\mathfrak{m}_g$ of Jurisich's decomposition $\mathfrak m\cong\mathfrak u^-\oplus\mathfrak{gl}_2\oplus\mathfrak u^+$ where $\mathfrak u^\pm$ are free Lie algebras. 
The  $ \mathfrak{gl}_2 $-summand occurs since the Fricke type monstrous Lie algebras have exactly one real simple root. Carnahan also showed that the Borcherds Cartan matrix for $\mathfrak{m}_g$ has the same form as that of $\mathfrak{m}$, with different block sizes as given by the dimensions of the root spaces for imaginary simple roots of $\mathfrak{m}_g$.
With this decomposition and homology computations similar to those in \cite{GarLep}, Carnahan  simplified the computation of the twisted denominator formulas for $\mathfrak{m}_g$.

\subsection{Generators and relations for $\mathfrak{m_g}$}
For all $(j,k)\in  I-\left\{(-1,1) \right\}$, the generators of $\mathfrak{m_g}$ are
$e_{-1}$, $f_{-1}$, $h_{1}$, $h_2$, and $e_{jk}$, $f_{jk}$ with defining relations:
\begin{align*}
 \left[h_{1},h_2\right]&=0,\\
 \left[h_1,e_{-1} \right] &= e_{-1},&              
              \left[h_2, e_{-1} \right] &= -e_{-1},\\
 \left[h_1,e_{jk} \right]&= e_{jk},&
              \left[h_2, e_{jk} \right] &= j e_{jk},\\
 \left[h_1,f_{-1} \right] &= -f_{-1},&
             \left[h_2,f_{-1} \right] &= f_{-1},\\
 \left[h_1,f_{jk} \right]&= - f_{jk},&
              \left[h_2,f_{jk} \right] &= -j f_{jk},\\
 \left[e_{-1},f_{-1} \right]&=h_1-h_2, \\
 \left[e_{-1},f_{jk} \right]&=0,  & \left[e_{jk},f_{-1} \right]&=0,\\
 \left[e_{jk},f_{pq} \right]&=-\delta_{jp}\delta_{kq} \left(jh_1 + h_2\right),\\ 
  \left(\ad e_{-1} \right)^j e_{jk}&=0,&\qquad \left(\ad f_{-1} \right)^j f_{jk}&=0,
\end{align*}
The Lie algebra $\mathfrak m_g$ has Cartan subalgebra${\frak h} = \C h_1\oplus \C h_2 = \mathfrak{h}_A/\mathfrak{z}$.


\subsection{Structure theorem for $\mathfrak m_g$ for $g$ of Fricke type}

 The unique real simple root is $\a_{-1}=(1,\frac{-1}{N})$. 
 The imaginary simple roots are 
 $\a_{jk}=(1,\frac{j}{N})$ for $j\in\Z_{>0}$.  
 
 The positive imaginary roots are $(m,\frac{j}{N})$ for $m,j\in\Z_{>0}$ which have norm $\frac{-2mj}{N}$.

The following theorem follows immediately from Jurisich, JPAA, Corollary 5.1 (also noted by Carnahan who called such Lie algebras {\it Fricke Lie algebras}).
\begin{theorem} Let $g\in\M$ be of Fricke type. 
Let $I=\{1,2,3,\dots\}$ and $I_0=\{-1\}$. Then 
$$\mathfrak{m}_g = \mathfrak u^+ \oplus \mathfrak{gl}_2 \oplus \mathfrak {u}^-$$ where $\mathfrak u^+$ is the free Lie algebra  $L(S^+)$  on the the set 
$$S^+=\{({\ad} e_i  )^{\ell } e_j \mid i \in I_0,\ j\in I,\ \ 0\leq \ell\leq j-1 \},$$
and $\mathfrak u^-$ is the free Lie algebra $L(S^-)$ on the set
$$S^-=\{({\ad} f_i  )^{\ell } f_j \mid i \in I_0,\ j\in I,\ 0\leq \ell\leq j-1 \}.$$
\end{theorem}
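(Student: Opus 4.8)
The plan is to obtain the decomposition as a direct application of the triangular decomposition of $\mathfrak m_g$ together with the Lazard elimination theorem for free Lie algebras, which is precisely the mechanism behind Jurisich's Corollary~5.1 \cite{JurJPAA}. The one structural input that makes the machinery apply is that a Fricke $\mathfrak m_g$ is a Borcherds algebra with \emph{exactly one} real simple root, namely $\alpha_{-1}=(1,-1/N)$ with $(\alpha_{-1},\alpha_{-1})=2$; every other simple root $\alpha_{jk}=(1,j/N)$, $j\in\Z_{>0}$, is imaginary. This is Carnahan's observation that $A_g$ has the same shape as the Monster matrix $A$. So the first step is simply to record $\mathfrak m_g=\mathfrak g(A_g)/\mathfrak z$ with $a_{jk,pq}=-(j+p)$ and to fix $I_0=\{-1\}$ as the set of real simple roots, with the $e_{jk}$ (identified with the $e_j$, $j\in I$, of the statement) spanning the imaginary simple root vectors.

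Next I would invoke the triangular decomposition $\mathfrak m_g=\mathfrak n^-\oplus\mathfrak h\oplus\mathfrak n^+$ and analyze $\mathfrak n^+=\mathfrak g_0^+/\mathfrak k_0^+$, where $\mathfrak g_0^+$ is free on $\{e_{-1}\}\cup\{e_{jk}\}$ and $\mathfrak k_0^+$ is the ideal of relations. The arithmetic input is the Serre exponent: since $a_{-1,-1}=2$ and $a_{-1,jk}=-((-1)+j)=1-j$, one computes $\tfrac{-2a_{-1,jk}}{a_{-1,-1}}+1=j$, so $\mathfrak k_0^+$ is generated precisely by the elements $(\ad e_{-1})^j e_{jk}$. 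Applying Lazard elimination to $\mathfrak g_0^+=\C e_{-1}\rtimes\mathfrak a$, where $\mathfrak a$ is the ideal of $\mathfrak g_0^+$ generated by the $e_{jk}$, shows that $\mathfrak a$ is free on $\{(\ad e_{-1})^\ell e_{jk}\mid \ell\ge 0\}$, the iterated brackets reducing to powers of $\ad e_{-1}$ because there is a single real generator.

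The heart of the proof is to identify $\mathfrak u^+$, the image of $\mathfrak a$ in $\mathfrak n^+$, with $L(S^+)$. For this I would prove that, viewed inside the \emph{free} algebra $\mathfrak a$, the relation ideal $\mathfrak k_0^+$ is exactly the ideal of $\mathfrak a$ generated by the excess generators $\{(\ad e_{-1})^\ell e_{jk}\mid \ell\ge j\}$. One inclusion is immediate from $[e_{-1},(\ad e_{-1})^\ell e_{jk}]=(\ad e_{-1})^{\ell+1}e_{jk}$, which shows that applying $\ad e_{-1}$ to a Serre generator never leaves this set and never involves a surviving generator; the reverse inclusion follows by writing a general element of $\mathfrak k_0^+$ in the form dictated by $\mathfrak g_0^+=\C e_{-1}\rtimes\mathfrak a$ and inducting on bracket length. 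Quotienting a free Lie algebra by the ideal generated by a subset of its free generators leaves the free Lie algebra on the complementary generators, so $\mathfrak u^+=\mathfrak a/\mathfrak k_0^+=L(S^+)$ with $S^+=\{(\ad e_{-1})^\ell e_{jk}\mid 0\le \ell\le j-1\}$. The range $0\le\ell\le j-1$ is transparent from $\mathfrak{sl}_2$-theory: $e_{jk}$ is a lowest-weight vector for $\langle e_{-1},f_{-1},h_1-h_2\rangle\cong\mathfrak{sl}_2$, since it is killed by $\ad f_{-1}$, and it generates a $j$-dimensional irreducible module. The identical argument on the opposite side gives $\mathfrak u^-=L(S^-)$.

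Finally I would assemble the global decomposition. The semidirect decomposition $\mathfrak g_0^+=\C e_{-1}\rtimes\mathfrak a$ descends to $\mathfrak n^+=\C e_{-1}\oplus\mathfrak u^+$, and dually $\mathfrak n^-=\C f_{-1}\oplus\mathfrak u^-$, so regrouping $\mathfrak m_g=\mathfrak n^-\oplus\mathfrak h\oplus\mathfrak n^+$ yields the vector-space decomposition $\mathfrak m_g=\mathfrak u^+\oplus(\C e_{-1}\oplus\C f_{-1}\oplus\C h_1\oplus\C h_2)\oplus\mathfrak u^-$. The middle summand is a subalgebra isomorphic to $\mathfrak{gl}_2$: the triple $e_{-1},f_{-1},h_1-h_2$ spans $\mathfrak{sl}_2$ while $h_1+h_2$ is central, so it is $\mathfrak{sl}_2\oplus\C(h_1+h_2)\cong\mathfrak{gl}_2$. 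The main obstacle is the identification in the third step of the relation ideal $\mathfrak k_0^+$ with the ideal generated by the excess generators inside the free algebra $\mathfrak a$; once that is in hand the remainder is bookkeeping. Since this identification is exactly the content of Jurisich's Corollary~5.1, the most economical route is to verify the single-real-root hypothesis and the Serre exponent $\tfrac{-2a_{-1,jk}}{a_{-1,-1}}+1=j$, and then to quote the corollary.
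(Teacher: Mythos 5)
Your proposal is correct and matches the paper's approach: the paper proves this theorem simply by noting that Fricke $\mathfrak m_g$ has a Borcherds Cartan matrix of the same form as the Monster's, with a single real simple root, and then quoting Jurisich's Corollary~5.1 from \cite{JurJPAA} --- exactly the ``most economical route'' you arrive at, with the correct Serre exponent computation $\tfrac{-2a_{-1,jk}}{a_{-1,-1}}+1=j$. Your additional sketch of the Lazard-elimination mechanism is an accurate unpacking of what that corollary does internally, not a different argument.
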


\subsection{Example: the Fricke Lie algebra $\mathfrak m_{2A}$, the Baby Monster } 

 The Baby Monster finite simple group $\mathbb B$ contains Fricke 
 elements of $\M$ and coincides with conjugacy class 2A in $\M$. In this case, $N=2$.

The McKay--Thompson series  $T_{2A}(\tau)$ for $\mathbb B$ 
 with constant term $a(0) = 104$ is given by
\begin{align*}j_{2A}(\tau)
&=T_{2A}(\tau)+104\\
&=\left(\left(\tfrac{\eta(\tau)}{\eta(2\tau)}\right)^{12}+2^6 \left(\tfrac{\eta(2\tau)}{\eta(\tau)}\right)^{12}\right)^2\\
&=\frac{1}{q} + 104 + 4372q + 96256q^2 +1240002q^3+10698752q^4+\dots
\end{align*}
and $\eta(\tau)$ is the Dedekind eta function.
The root multiplicities $c(1,\frac{n}{2})$ for the simple roots $(1,\frac{n}{2})$ are the coefficients of $q^n$ in 
$$T_{2A}(-1/\tau)=T_{2A}(\tau/2).$$  Thus
$$c(1,\frac{1}{2})=4372,\ c(1,1)=96256,\ c(1,\frac{3}{2})=1240002,\ c(1,2)=10698752,\dots$$



\begin{figure}[!ht]
\includegraphics[height=11.5cm]{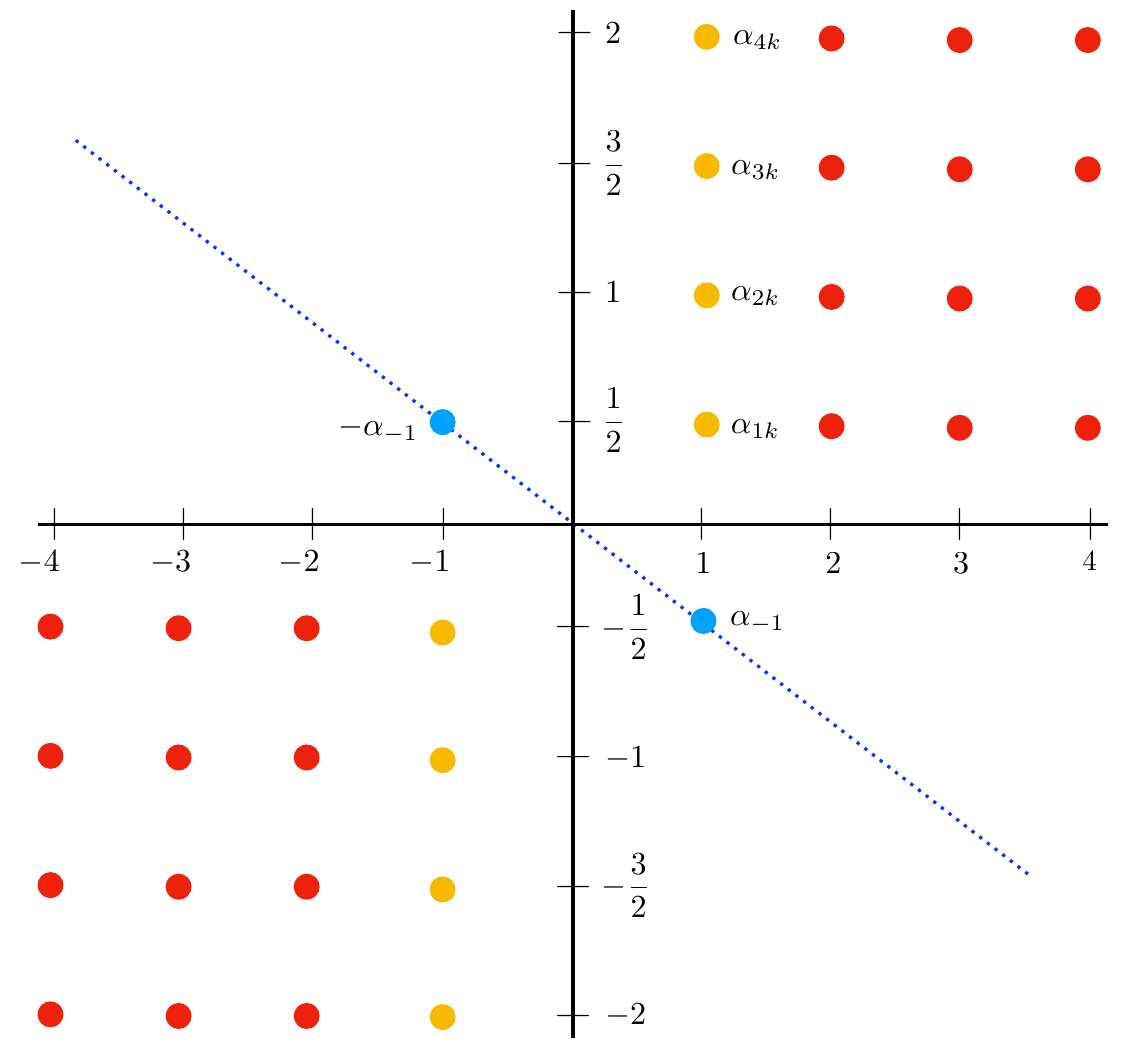} 
\caption{Root lattice of the Baby Monster Lie algebra $\mathfrak m_{2A}$}\label{F-rts}
 \end{figure}

\begin{figure}[!ht]
\includegraphics[height=11.5cm]{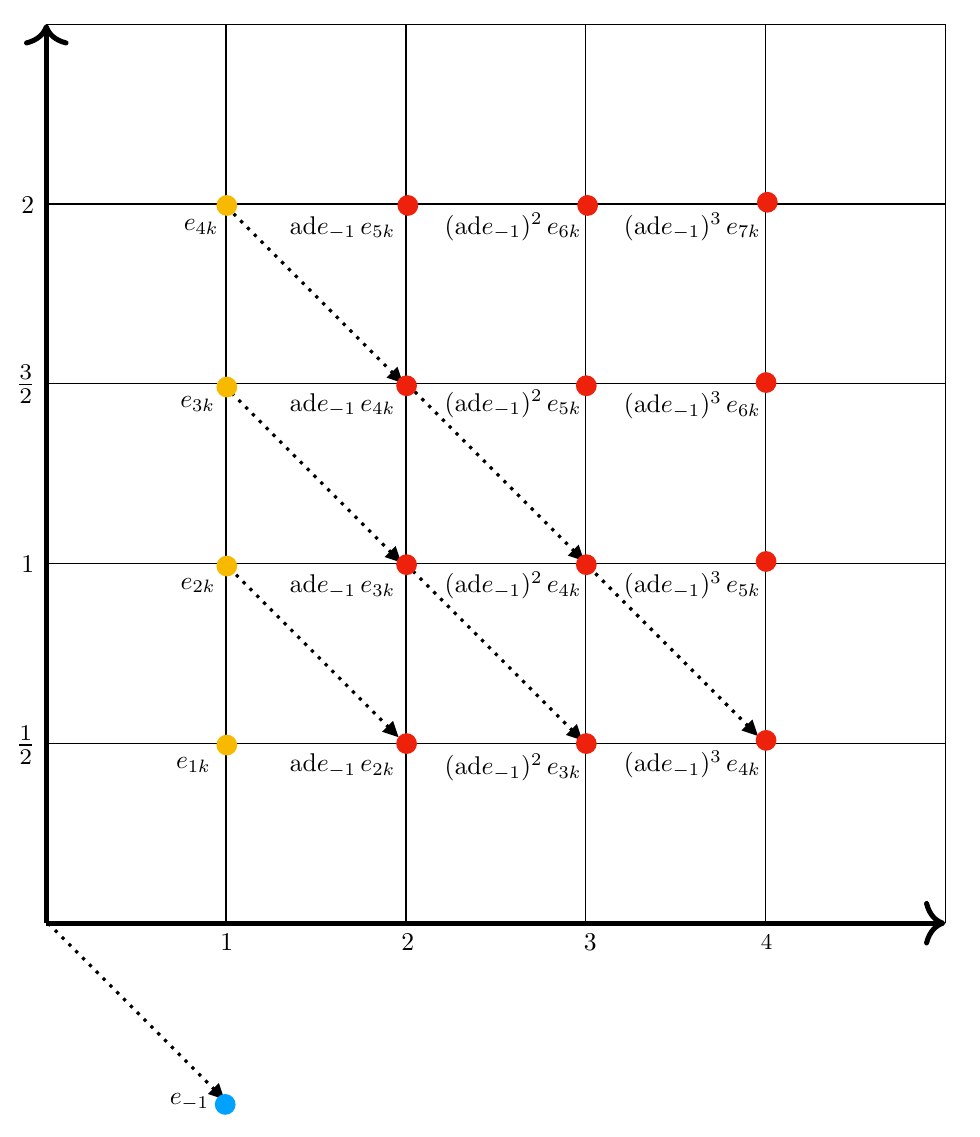} 
\caption{$\a_{-1}$-weight strings of $\mathfrak{gl}_2(-1)$-modules in the Baby Monster Lie algebra $\mathfrak m_{2A}$}
 \end{figure}

\subsection{Example: the non-Fricke Lie algebra $\mathfrak m_{2B}$}

Every non-Fricke  $T_g$ has a preferred eta-product expansion (Proposition 4.2, \cite{Car51}):
$$T_{g^{N/n}}=\prod_{i=1}^k\eta(a_i\tau)^{b_i}$$

where the Dedekind $\eta$-function is 
$$\eta(\tau) = q^\frac{1}{24} \prod_{n=1}^\infty \left(1 - q^n\right).$$

The $\eta$-quotient expression for $T_{2B}(\tau)$ is ([FLM], Section 10.5) $T_{2B}(\tau)=\dfrac{\eta(\tau)^{24}}{\eta(2\tau)^{24}}$. We now apply the transformation $\tau\to(-1/\tau)$ under which
$$\eta(\tau)\to\eta\left(-\frac{1}{\tau}\right) = \sqrt{-i\tau}\, \eta(\tau).$$

The transformed $\eta$-product is 
$$T_{2B}(-1/\tau)={2^{12}}\dfrac{\eta(\tau)^{24}}{\eta\left(\frac{\tau}{2}\right)^{24}}.$$
This gives
\begin{align*}T_{2B}(-1/\tau)&= 4096q^{1/2} + 98304q + 1228800q^{3/2}+\dots\\
&=2^{12}(q^{1/2}+24q+300q^{3/2}+\dots)
\end{align*}



We note that $$\dfrac{1}{T_g\left(\dfrac{\tau}{2}\right)}=q^{1/2}+24q+300q^{3/2}+\dots$$
so we obtain the identity (also observed in \cite{DLiM})
$$T_{2B}(-1/\tau)=\dfrac{2^{12}}{T_g(\dfrac{\tau}{2})}$$
which is of the form 
$$T_{g}(-1/\tau)=k_1+\dfrac{k_2}{T_g(\dfrac{\tau}{N})}$$
for some non-negative integers $k_1$ and $k_2$, as suggested by Carnahan. We note the similarity with the Jacobi theta function
$$\theta_4(\tau)=\dfrac{\eta\left(\frac{\tau}{2}\right)^2}{\eta(\tau)}=\sum_{n\in\Z}(-1)^ne^{\pi i n^2\tau}.$$
 \begin{lemma}The transformed series $T_{2B}(-1/\tau)$ for conjugacy class 2B is
$$T_{2B}(-1/\tau)={2^{12}}\dfrac{\eta(\tau)^{24}}{\eta\left(\frac{\tau}{2}\right)^{24}}
=\dfrac{2^{12}}{T_{2B}(\frac{\tau}{2})}=\left(\dfrac{2\eta(\tau)}{\theta_4(\tau)}\right)^{12}.
$$
\end{lemma}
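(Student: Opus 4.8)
The plan is to establish the three claimed equalities in sequence, all of which reduce to the transformation law $\eta(-1/\tau)=\sqrt{-i\tau}\,\eta(\tau)$ recorded above together with the defining $\eta$-quotient $T_{2B}(\tau)=\eta(\tau)^{24}/\eta(2\tau)^{24}$ from Section~10.5 of [FLM].

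First I would prove the leftmost identity by applying $\tau\mapsto -1/\tau$ directly to the $\eta$-quotient, writing $T_{2B}(-1/\tau)=\eta(-1/\tau)^{24}/\eta(-2/\tau)^{24}$. The numerator transforms immediately via the stated law. For the denominator the key maneuver is to rewrite $-2/\tau=-1/(\tau/2)$, so that the same transformation applies with $\tau$ replaced by $\tau/2$, giving $\eta(-2/\tau)=\sqrt{-i\tau/2}\,\eta(\tau/2)$. The two automorphy factors then contribute $(\sqrt{-i\tau})^{24}/(\sqrt{-i\tau/2})^{24}=(-i\tau)^{12}/(-i\tau/2)^{12}=2^{12}$, which yields $T_{2B}(-1/\tau)=2^{12}\,\eta(\tau)^{24}/\eta(\tau/2)^{24}$.

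Next I would verify the middle equality by evaluating the $\eta$-quotient at $\tau/2$: since $T_{2B}(\tau/2)=\eta(\tau/2)^{24}/\eta(\tau)^{24}$, its reciprocal is exactly $\eta(\tau)^{24}/\eta(\tau/2)^{24}$, so multiplying by $2^{12}$ recovers the expression obtained in the first step. For the rightmost equality I would substitute the identity $\theta_4(\tau)=\eta(\tau/2)^2/\eta(\tau)$, so that $2\eta(\tau)/\theta_4(\tau)=2\eta(\tau)^2/\eta(\tau/2)^2$; raising this to the twelfth power reproduces $2^{12}\,\eta(\tau)^{24}/\eta(\tau/2)^{24}$, closing the chain of equalities.

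The only genuinely delicate point is the branch of the square root in the transformation law, since a careless choice could in principle introduce a spurious unit factor. This concern evaporates because every occurrence of $\sqrt{-i\tau}$ enters raised to the twenty-fourth power, collapsing the multivalued square root to the single-valued quantity $(-i\tau)^{12}$ independently of the branch, and likewise for $\sqrt{-i\tau/2}$. Hence no sign or phase ambiguity survives, and the proof is a short chain of substitutions rather than a computation requiring analytic continuation.
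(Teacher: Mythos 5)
Your proof is correct and follows essentially the same route as the paper: both apply the transformation law $\eta(-1/\tau)=\sqrt{-i\tau}\,\eta(\tau)$ to the $\eta$-quotient $T_{2B}(\tau)=\eta(\tau)^{24}/\eta(2\tau)^{24}$, extract the factor $2^{12}$ from the automorphy factors, and then read off the remaining two equalities from the definitions of $T_{2B}(\tau/2)$ and $\theta_4(\tau)=\eta(\tau/2)^2/\eta(\tau)$. Your treatment is if anything slightly tidier, since you verify the middle equality algebraically where the paper checks it by comparing $q$-expansions, and you explicitly note that the twenty-fourth power removes any branch ambiguity in the square root.
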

This gives an example of the claim of Conway and Norton \cite{CN} that $\eta$-products can be written in terms of $\theta$-functions.





To obtain root multiplicities for $\mathfrak m_{2B}$, note that

\begin{align*}
T_{2B}(\tau)&=\dfrac{\eta(\tau)^{24}}{\eta(2\tau)^{24}}\\
&=\dfrac{1}{q}\prod_{n>0} (1-q^{2n})^{24}(1-q^{2n+1})^{24}(1-q^{2n})^{-24}\\
&=\dfrac{1}{q}\prod_{n\geq 0} (1-q^{2n+1})^{24}\\
&=\dfrac{1}{q}\prod_{m\geq 0} (1-q^{m})^{c(m,0)}.
\end{align*}
The root multiplicities are  then
$$c(m,0)=
\begin{cases} 
0,&\text{$m$ even}\\ 
24,&\text{$m$ odd}\\ 
\end{cases}$$
This agrees with the values Carnahan's table in \cite{Car51}.

We have
$$T_{2B}(-1/\tau)= 4096q^{1/2} + 98304q + 1228800q^{3/2}+\cdots.$$
Thus the $c(1,\frac{n}{2})$are given by
$$c(1,\frac{1}{2})=4096,\ c(1,1)=98304,\ c(1,\frac{3}{2})=1228800,\cdots.$$

\subsection{Lie algebraic structure of $\mathfrak m_g$}

 The  Lie algebras $\mathfrak m_g$ are constructed in terms of  semi-infinite
cohomology  in \cite{CarDuke} and \cite{DF}. An earlier  construction of  $\mathfrak m_g$ by Borcherds and independently by I. Frenkel was in terms of the No-ghost Theorem applied to  the 26D bosonic string (see \cite{BoInvent}, \cite{DF} and \cite{FLM}).

In joint work with H. Chen, J. Du, D. Hou,  D. Tan and  F. Thurman \cite{CCDHTT}, we studied the structure of the Lie algebras $\mathfrak m_g$ as Borcherds algebras via their Borcherds Cartan matrices, generators and relations and their root systems (see also \cite{Tan}, this volume). We proved a variation of Jurisich's theorem which shows that like the Monster Lie algebra $\mathfrak m$, when $g$ is non-Fricke,  $\mathfrak m_g$ contain free subalgebras generated by certain positive and negative imaginary root vectors.  This was known in the Fricke case \cite{Car51}.

If $g\in\M$ is non-Fricke, then 
$$\mathfrak m_g= \mathfrak{u}^- \oplus (\mathcal{H} \oplus \mathbb{C} h_{(1,1)} ) \oplus \mathfrak{u}^+$$
where $\mathfrak{u}^\pm$ are free Lie algebras generated by a countable set of certain root vectors, 
$ \mathcal{H} $ is an infinite-dimensional Heisenberg Lie algebra and $h_{(-1,1)} $ is a central element of $ \mathcal{H} $.


\section{Constructing Lie group analogs for the Lie algebras $\mathfrak m_g$}

Since the Monstrous Lie algebras $\mathfrak m_g$ describe physical symmetries, it is important to associate an analog of a Lie group $G(\mathfrak m_g)$ to them. There are known difficulties with constructing groups for Borcherds  algebras,  since root vectors  corresponding to  imaginary simple roots do not act locally nilpotently on the adjoint representation, or on most faithful highest weight modules. 

To overcome this, several different but interrelated constructions have been developed (\cite{CJM}, \cite{CJM2026}, \cite{CJ}). As is the case with Kac--Moody algebras, it turns out to be natural and useful to consider a (topological) completion of the Borcherds algebra and to construct a group of automorphisms of the completion.

In \cite{CJM2026}, the authors constructed a complete pro-unipotent group $\widehat{U}^+$ of automorphisms of a completion $\widehat{\frak m}$ of $\frak m$ by defining  the algebraic notion of \emph{pro-summability} which is equivalent to convergence on finite dimensional subspaces.
The generators of $\widehat{U}^+$  as a topological group are 
 power series with constant term 1. 
The authors show that the action of the Monster finite group $\M$ on $\mathfrak m$ induces an action of~$\M$ on~$\widehat{\frak m}$, and that this induces a compatible  action of $\M$ on~$\widehat{U}^+$.

In \cite{CJM}, the authors also constructed a group $G(\frak m)$ associated with all roots of  $\mathfrak m$, both real and imaginary, and relations between them. In contrast, Tits' presentation of a Kac--Moody group \cite{Ti87}  only has generators and relations associated to real roots.   

The group $G(\frak m)$ is not a group of automorphisms of $\mathfrak m$ itself,  but the authors show that 
certain subgroups of $G(\mathfrak m)$ are automorphism groups of  $\mathfrak{m}$, of certain $\gl_2$ subalgebras of $\mathfrak{m}$ and  of a completion $\widehat{\mathfrak{m}}$ of $\mathfrak{m}$.

For the class of Borcherds algebras that admit a direct sum decomposition into a  Kac--Moody (or semisimple) algebra $\mathfrak g_J$ and  free Lie algebras $\mathfrak u^\pm$ as in \cite{JurJPAA}, in joint work with Elizabeth Jurisich \cite{CJ}, the authors   constructed a semi-direct product of a Kac--Moody (or semisimple) group and a Magnus group of invertible formal power series corresponding to the negatives of the imaginary simple roots. We also applied our  construction to a number of concrete examples.

\section{Open problems and future directions}
The open problems in this area are numerous. They connect to group theory, number theory Lie theory and string theory. We mention a few open problems below.

\medskip
\subsection{Lie group analogs for $\mathfrak m_g$}
\leavevmode

{\bf Question 1.} It should  be possible to construct an analog~$G(\frak g)$ of the group~$G(\frak m)$ of \cite{CJM} for any Borcherds algebra~$\frak g$, relative to a Borcherds  Cartan matrix~$A$. Non-zero entries on the diagonal of $A$  corresponding to real or imaginary roots would give rise to $\SL_2$ subgroups of $G(\frak g)$ and these would correspond to groups of automorphisms of the relevant $\frak{sl}_2$ subalgebras of $\frak g$.
Zero entries on the diagonal correspond to norm zero imaginary simple roots. The Lie subalgebra generated by $\widehat{e}_i$, $\widehat{h}_i$ and $f_i$ is a 3-dimensional Heisenberg subalgebra of $\frak g$ which would give rise to 3-dimensional Heisenberg subgroups of $G(\frak g)$.  It should be possible to construct groups of automorphisms of 3-dimensional Heisenberg subalgebras of $\frak g$ corresponding to these subgroups.

{\bf Question 2.} What role does $G(\mathfrak m)$ play in Monstrous Moonshine, if any?

{\bf Question 3.} The action $\M=\Aut(V^\natural)   \to    \Aut(\frak m)$ of $\M$ on $\mathfrak m$ comes from the \cite{FLM}-action of $\M$ on $V^\natural$. 

(a) Does some subgroup of $G(\mathfrak m)$ act on $V^\natural$? 

(b) Are the \cite{FLM} involutions on $V^\natural$ contained in $G(\mathfrak m)$? 

(c) Is the centralizer $C_\M(g)$ a subgroup of  $G(\mathfrak m)$ for any $g\in\M$? For involutions $g\in\M$?

(d) Is $\M$ contained in the centralizer of $G(\mathfrak m)$ or vice-versa? 

(e) Is there a representation of $G(\mathfrak m)$  (or a subgroup of $G(\mathfrak m)$) on ${\rm End}(V^\natural)$?

\bigskip
\subsection{The structure of $\mathfrak m_g$}\leavevmode

{\bf Question 4.} Recall the decomposition of the 
Monster Lie algebra from \cite{JurJPAA}
$$
\mathfrak m= \mathfrak{u}^- \oplus \mathfrak{gl}_2 \oplus \mathfrak{u}^+.
$$
When $g\in\M$ is Fricke, $\mathfrak m_g$ has similar structure \cite{CarFricke}:
$$\mathfrak m_g= \mathfrak{v}^- \oplus \mathfrak{gl}_2 \oplus \mathfrak{v}^+$$
where $\mathfrak{u}^\pm$ and $\mathfrak{v}^\pm$ are free Lie algebras, where $\mathfrak{v}^\pm$ are generated by countably many elements of the form
$$
e_{\ell,jk}:= \left(\ad e_{-1} \right)^\ell e_{jk}\quad\text{and}\quad f_{\ell,jk}:= \left(\ad f_{-1} \right)^\ell f_{jk}.
$$
Carnahan used this decomposition to simplify the proof of the  twisted denominator identities for $\mathfrak m_g$ in the Fricke case. Can the  Monstrous Lie algebras $\mathfrak m_g$ of Fricke type be realized as proper subalgebras of $\mathfrak m$?

{\bf Question 5.} Let $g\in\M$ and let ${\mathfrak m}^g$ be the subalgebra in $\mathfrak m$ of fixed points  under~$g$:
$${\mathfrak m}^g=\{x\in \mathfrak m\mid g\cdot x=x\}\subset \mathfrak m.$$
It is natural to consider ${\mathfrak m}^g$ to be the Lie algebra corresponding to $C_\M(g)$.
 What is the relationship between ${\mathfrak m}^g$ and~$\mathfrak m_g$?

\bigskip
\subsection{Further realization of $\mathfrak m_g$ in string theory} \leavevmode

{\bf Question 6.} Is there a physical interpretation of the  generators of $\mathfrak m=P_1/R$   in (the space of BPS-states of) the model of the compactified  Heterotic string  by Persson, Paquette and Volpato?  These generators, corresponding to simple roots $(1,j)$, are of the form 
$$e_{j,u}=u\otimes \iota({c_j})+R$$
where $u$ is a primary vector in $V^\natural_{j+1}$ and for fixed $j>0$ we fix $c_j\in \widehat{L}$ (a degree 2 central extension of $L=II_{1,1}$)  such that $\bar{c_j}=(1,j)$.

{\bf Question 7.} In joint work with Natalie Paquette \cite{CP}, we gave an interpretation of certain  symmetries of the root system of the Monster Lie algebra $\frak m$, and more generally the  infinite family of monstrous Lie algebras $\frak m_g$,  in terms of discrete symmetries of the compactified model of the Heterotic String by Persson, Paquette and Volpato. 

Can these discrete symmetries, which are subgroups of $O(2,2:\Z)\leq O(2,2:\R),$  be realized in $G(\mathfrak m)$?
Up to an extension by a group of order two, certain of these symmetries (involutions) are contained in $G(\mathfrak m)$.

{\bf Question 8.} How do the free subalgebras of the Monster Lie algebra $\frak m$ (or their generators) arise  from vertex algebra elements in  $V=V^\natural\otimes V_{1,1}$?  Is there a physical interpretation of these free subalgebras and their action in terms of the Heterotic String?

\bigskip

\subsection{Non-Fricke denominator identities}
\leavevmode

{\bf Question 9.} The decomposition  $\mathfrak m_g= \mathfrak{u}^- \oplus (\mathcal{H} \oplus \mathbb{C} h_{(1,1)} ) \oplus \mathfrak{u}^+$ 
was used in \cite{Tan} to simplify Carnahan's proof of the twisted denominator identity for $\mathfrak m_g$ when $g$ is non-Fricke.  Can this decomposition be used to simplify the semi-infinite cohomology construction of $\mathfrak m_g$?


\bibliographystyle{amsalpha}
\bibliography{VOAmath}{}

@book {BorcherdsICM,
     TITLE = {Fields {M}edallists' lectures},
    SERIES = {World Scientific Series in 20th Century Mathematics},
    VOLUME = {9},
    EDITOR = {Atiyah, Michael and Iagolnitzer, Daniel},
   EDITION = {Second},
 PUBLISHER = {World Scientific Publishing Co., Inc., River Edge, NJ},
      YEAR = {2003},
     PAGES = {xvi+806},
      ISBN = {981-238-259-3},
   MRCLASS = {00B15 (00B60 01A60)},
  MRNUMBER = {2039982},
       DOI = {10.1142/9789812564856},
       URL = {https://doi-org.proxy.libraries.rutgers.edu/10.1142/9789812564856},
}

@article {CJM,
    AUTHOR = {Carbone, Lisa and Jurisich, Elizabeth and Murray, Scott H.},
     TITLE = {Constructing a {L}ie group analog for the {M}onster {L}ie
              algebra},
   JOURNAL = {Lett. Math. Phys.},
  FJOURNAL = {Letters in Mathematical Physics},
    VOLUME = {112},
      YEAR = {2022},
    NUMBER = {3},
     PAGES = {Paper No. 43, 16},
      ISSN = {0377-9017},
   MRCLASS = {17B67 (11F22 20D08 22E65)},
  MRNUMBER = {4418318},
       DOI = {10.1007/s11005-022-01531-4},
       URL = {https://doi-org.proxy.libraries.rutgers.edu/10.1007/s11005-022-01531-4},
}

@article {CarDuke,
    AUTHOR = {Carnahan, Scott},
     TITLE = {Generalized moonshine, {II}: {B}orcherds products},
   JOURNAL = {Duke Math. J.},
  FJOURNAL = {Duke Mathematical Journal},
    VOLUME = {161},
      YEAR = {2012},
    NUMBER = {5},
     PAGES = {893--950},
      ISSN = {0012-7094,1547-7398},
   MRCLASS = {17B69 (11F22)},
  MRNUMBER = {2904095},
MRREVIEWER = {Rainer\ Schulze-Pillot},
       DOI = {10.1215/00127094-1548416},
       URL = {https://doi.org/10.1215/00127094-1548416},
}

@article{Tan,
  Author={Daniel Tan},
  Journal = {Preprint},
  Title={The structure of non-{F}ricke monstrous {L}ie algebras},
	URL = {https://arxiv.org/abs/2507.17854},
  Year={2026}
}

@article{BoInvent,
	Author = {Borcherds, Richard E.},
	Fjournal = {Inventiones Mathematicae},
	Issn = {0020-9910},
	Journal = {Invent. Math.},
	Mrclass = {11F22 (17A70 17B67 20D08)},
	Mrnumber = {1172696},
	Mrreviewer = {Steven N. Kass},
	Number = {2},
	Pages = {405--444},
	Title = {Monstrous {M}oonshine and {M}onstrous {L}ie superalgebras},
	Url = {https://doi.org/10.1007/BF01232032},
	Volume = {109},
	Year = {1992},
	Bdsk-Url-1 = {https://doi.org/10.1007/BF01232032}
	
	}

@article {BoGKMA,
    AUTHOR = {Borcherds, Richard},
     TITLE = {Generalized {K}ac-{M}oody algebras},
   JOURNAL = {J. Algebra},
  FJOURNAL = {Journal of Algebra},
    VOLUME = {115},
      YEAR = {1988},
    NUMBER = {2},
     PAGES = {501--512},
      ISSN = {0021-8693},
   MRCLASS = {17B05 (17B10 17B67)},
  MRNUMBER = {943273},
MRREVIEWER = {Chang Kun Liu},
       DOI = {10.1016/0021-8693(88)90275-X},
       URL = {https://doi-org.nuncio.cofc.edu/10.1016/0021-8693(88)90275-X},
       
       }

@article {CCDHTT,
    AUTHOR = {Carbone, L. and Chen, H. and  Du, J. and  Hou,  D. and  Tan, D. and  Thurman, F.},
     TITLE = {Monstrous {L}ie algebras as {B}orcherds algebras},
   JOURNAL = {In preparation},
      YEAR = {2026},
}

@article {CJM2026,
    AUTHOR = {Carbone, Lisa and Jurisich, Elizabeth and Murray, Scott H.},
     TITLE = {A {L}ie group analog for the {M}onster {L}ie
              algebra},
   JOURNAL = {arXiv:2311.11078v3 [math.RT] },
   YEAR = {2026}
    
}

@article {CP,
    AUTHOR = {Carbone, Lisa and Paquette, Natalie},
     TITLE = {Imaginary reflections and discrete symmetries in the heterotic {M}onster},
   JOURNAL = { arXiv:2202.09720v3 [hep-th]  },
   YEAR = {2026}
    
}

@article {CJ,
    AUTHOR = {Carbone, Lisa and Jurisich, Elizabeth},
     TITLE = {A {M}agnus group construction for a class of {B}orcherds algebras},
   JOURNAL = {Preprint},
   YEAR = {2026}
    
}

@article {Car51,
    AUTHOR = {Carnahan, Scott},
     TITLE = {51 constructions of the {M}oonshine module},
   JOURNAL = {Commun. Number Theory Phys.},
  FJOURNAL = {Communications in Number Theory and Physics},
    VOLUME = {12},
      YEAR = {2018},
    NUMBER = {2},
     PAGES = {305--334},
      ISSN = {1931-4523,1931-4531},
   MRCLASS = {17B69 (11F22 20D08)},
  MRNUMBER = {3824577},
MRREVIEWER = {John\ Francis Robert Duncan},
       DOI = {10.4310/cntp.2018.v12.n2.a3},
       URL = {https://doi.org/10.4310/cntp.2018.v12.n2.a3},
}

@article {CarFricke,
    AUTHOR = {Carnahan, Scott},
     TITLE = {Fricke {L}ie algebras and the genus zero property in
              {M}oonshine},
   JOURNAL = {J. Phys. A},
  FJOURNAL = {Journal of Physics. A. Mathematical and Theoretical},
    VOLUME = {50},
      YEAR = {2017},
    NUMBER = {40},
     PAGES = {404002, 21},
      ISSN = {1751-8113,1751-8121},
   MRCLASS = {17B69 (17B67)},
  MRNUMBER = {3708084},
MRREVIEWER = {Shintarou\ Yanagida},
       DOI = {10.1088/1751-8121/aa781d},
       URL = {https://doi.org/10.1088/1751-8121/aa781d},
}

@article {CarGMI,
    AUTHOR = {Carnahan, Scott},
     TITLE = {Generalized moonshine {I}: genus-zero functions},
   JOURNAL = {Algebra Number Theory},
  FJOURNAL = {Algebra \& Number Theory},
    VOLUME = {4},
      YEAR = {2010},
    NUMBER = {6},
     PAGES = {649--679},
      ISSN = {1937-0652,1944-7833},
   MRCLASS = {11F22 (11F25 17B69 20D08)},
  MRNUMBER = {2728485},
MRREVIEWER = {Rainer\ Schulze-Pillot},
       DOI = {10.2140/ant.2010.4.649},
       URL = {https://doi.org/10.2140/ant.2010.4.649},
}

@article {CN,
    AUTHOR = {Conway, J. H. and Norton, S. P.},
     TITLE = {Monstrous moonshine},
   JOURNAL = {Bull. London Math. Soc.},
  FJOURNAL = {The Bulletin of the London Mathematical Society},
    VOLUME = {11},
      YEAR = {1979},
    NUMBER = {3},
     PAGES = {308--339},
      ISSN = {0024-6093},
   MRCLASS = {20D08 (10D12)},
  MRNUMBER = {554399},
MRREVIEWER = {Stephen D. Smith},
       DOI = {10.1112/blms/11.3.308},
       URL = {https://doi-org.proxy.libraries.rutgers.edu/10.1112/blms/11.3.308},
}

@article {CM,
    AUTHOR = {Carnahan, Scott and Miyamoto,Masahiko },
     TITLE = {Regularity of fixed-point vertex operator subalgebras},
   JOURNAL = {Preprint},
      YEAR = {2016},
   URL = {
https://doi.org/10.48550/arXiv.1603.05645},
}

@article {DGH,
    AUTHOR = {Dixon, L. and Ginsparg, P. and Harvey, J.},
     TITLE = {Beauty and the beast: superconformal symmetry in a {M}onster
              module},
   JOURNAL = {Comm. Math. Phys.},
  FJOURNAL = {Communications in Mathematical Physics},
    VOLUME = {119},
      YEAR = {1988},
    NUMBER = {2},
     PAGES = {221--241},
      ISSN = {0010-3616,1432-0916},
   MRCLASS = {81E99 (17A70 17B65 20D08 81E40)},
  MRNUMBER = {968697},
MRREVIEWER = {Geoffrey\ Mason},
       URL = {http://projecteuclid.org/euclid.cmp/1104162400},
}

@article {Dong,
    AUTHOR = {Dong, Chongying},
     TITLE = {Vertex algebras associated with even lattices},
   JOURNAL = {J. Algebra},
  FJOURNAL = {Journal of Algebra},
    VOLUME = {161},
      YEAR = {1993},
    NUMBER = {1},
     PAGES = {245--265},
      ISSN = {0021-8693,1090-266X},
   MRCLASS = {17B65 (81R10)},
  MRNUMBER = {1245855},
MRREVIEWER = {Vjatcheslav\ Futorny},
       DOI = {10.1006/jabr.1993.1217},
       URL = {https://doi.org/10.1006/jabr.1993.1217},
}

@incollection {DLiM,
    AUTHOR = {Dong, Chongying and Li, Haisheng and Mason, Geoffrey},
     TITLE = {Some twisted sectors for the {M}oonshine module},
 BOOKTITLE = {Moonshine, the {M}onster, and related topics ({S}outh
              {H}adley, {MA}, 1994)},
    SERIES = {Contemp. Math.},
    VOLUME = {193},
     PAGES = {25--43},
 PUBLISHER = {Amer. Math. Soc., Providence, RI},
      YEAR = {1996},
      ISBN = {0-8218-0385-9},
   MRCLASS = {17B69 (81R10 81T40)},
  MRNUMBER = {1372716},
MRREVIEWER = {Mirko\ Primc},
       DOI = {10.1090/conm/193/02365},
       URL = {https://doi.org/10.1090/conm/193/02365},
}

@article {DF,
    AUTHOR = {Duncan, John F. R. and Frenkel, Igor B.},
     TITLE = {Rademacher sums, moonshine and gravity},
   JOURNAL = {Commun. Number Theory Phys.},
  FJOURNAL = {Communications in Number Theory and Physics},
    VOLUME = {5},
      YEAR = {2011},
    NUMBER = {4},
     PAGES = {849--976},
      ISSN = {1931-4523,1931-4531},
   MRCLASS = {81T40},
  MRNUMBER = {2905139},
MRREVIEWER = {An\ Huang},
       DOI = {10.4310/CNTP.2011.v5.n4.a4},
       URL = {https://doi.org/10.4310/CNTP.2011.v5.n4.a4},
}

@article {DLM,
    AUTHOR = {Dong, Chongying and Li, Haisheng and Mason, Geoffrey},
     TITLE = {Modular-invariance of trace functions in orbifold theory and
              generalized {M}oonshine},
   JOURNAL = {Comm. Math. Phys.},
  FJOURNAL = {Communications in Mathematical Physics},
    VOLUME = {214},
      YEAR = {2000},
    NUMBER = {1},
     PAGES = {1--56},
      ISSN = {0010-3616,1432-0916},
   MRCLASS = {17B69 (11F22)},
  MRNUMBER = {1794264},
MRREVIEWER = {Vassily\ Gorbounov},
       DOI = {10.1007/s002200000242},
       URL = {https://doi.org/10.1007/s002200000242},
}

@book{FLM,
	Author = {Frenkel, Igor and Lepowsky, James and Meurman, Arne},
	Isbn = {0-12-267065-5},
	Mrclass = {17B65 (17B67 20D08 81D15 81E40)},
	Mrnumber = {996026},
	Mrreviewer = {Kailash C. Misra},
	Pages = {liv+508},
	Publisher = {Academic Press, Inc., Boston, MA},
	Series = {Pure and Applied Mathematics},
	Title = {Vertex operator algebras and the {M}onster},
	Volume = {134},
	Year = {1988}}

@article {FLMPNAS,
    AUTHOR = {Frenkel, I. B. and Lepowsky, J. and Meurman, A.},
     TITLE = {A natural representation of the {F}ischer-{G}riess {M}onster
              with the modular function {$J$} as character},
   JOURNAL = {Proc. Nat. Acad. Sci. U.S.A.},
  FJOURNAL = {Proceedings of the National Academy of Sciences of the United
              States of America},
    VOLUME = {81},
      YEAR = {1984},
    NUMBER = {10, , Phys. Sci.},
     PAGES = {3256--3260},
      ISSN = {0027-8424},
   MRCLASS = {20D08 (17B67 20C20)},
  MRNUMBER = {747596},
       DOI = {10.1073/pnas.81.10.3256},
       URL = {https://doi-org.proxy.libraries.rutgers.edu/10.1073/pnas.81.10.3256},
}

@article {GarLep,
    AUTHOR = {Garland, Howard and Lepowsky, James},
     TITLE = {Lie algebra homology and the {M}acdonald-{K}ac formulas},
   JOURNAL = {Invent. Math.},
  FJOURNAL = {Inventiones Mathematicae},
    VOLUME = {34},
      YEAR = {1976},
    NUMBER = {1},
     PAGES = {37--76},
      ISSN = {0020-9910,1432-1297},
   MRCLASS = {17B55},
  MRNUMBER = {414645},
MRREVIEWER = {S.\ I.\ Gel\cprime fand},
       DOI = {10.1007/BF01418970},
       URL = {https://doi.org/10.1007/BF01418970},
}

@incollection {HL,
    AUTHOR = {Harada, Koichiro and Lang, Mong Lung},
     TITLE = {Modular forms associated with the {M}onster module},
 BOOKTITLE = {The {M}onster and {L}ie algebras ({C}olumbus, {OH}, 1996)},
    SERIES = {Ohio State Univ. Math. Res. Inst. Publ.},
    VOLUME = {7},
     PAGES = {59--83},
 PUBLISHER = {de Gruyter, Berlin},
      YEAR = {1998},
   MRCLASS = {11F22 (17B69 20D08)},
  MRNUMBER = {1650637},
MRREVIEWER = {A. S. Kondrat\cprime ev},
}

@article{HM1,
	Author = {Harvey, Jeffrey A. and Moore, Gregory},
	Fjournal = {Nuclear Physics. B. Theoretical, Phenomenological, and Experimental High Energy Physics. Quantum Field Theory and Statistical Systems},
	Issn = {0550-3213},
	Journal = {Nuclear Phys. B},
	Mrclass = {81T30 (11F22 11F55 17B67 17B81 32G81 81R10)},
	Mrnumber = {1393643},
	Mrreviewer = {Robbert Dijkgraaf},
	Number = {2-3},
	Pages = {315--368},
	Title = {Algebras, {BPS} states, and strings},
	Url = {https://doi-org.proxy.libraries.rutgers.edu/10.1016/0550-3213(95)00605-2},
	Volume = {463},
	Year = {1996},
	Bdsk-Url-1 = {https://doi-org.proxy.libraries.rutgers.edu/10.1016/0550-3213(95)00605-2}}

@article {HM2,
    AUTHOR = {Harvey, Jeffrey A. and Moore, Gregory},
     TITLE = {On the algebras of {BPS} states},
   JOURNAL = {Comm. Math. Phys.},
  FJOURNAL = {Communications in Mathematical Physics},
    VOLUME = {197},
      YEAR = {1998},
    NUMBER = {3},
     PAGES = {489--519},
      ISSN = {0010-3616,1432-0916},
   MRCLASS = {81T40 (11F22 14D21 14J81 17B67 32J81 81R10 81T30)},
  MRNUMBER = {1652775},
MRREVIEWER = {Bruce\ Hunt},
       DOI = {10.1007/s002200050461},
       URL = {https://doi.org/10.1007/s002200050461},
}

@article {Hohn,
    AUTHOR = {H\"ohn, G},
     TITLE = { Generalized moonshine for the Baby Monster},
   JOURNAL = {Preprint.},

    VOLUME = {197},
      YEAR = {2003},
    
}

@article {Hu2008,
    AUTHOR = {Huang, Yi-Zhi},
     TITLE = {Vertex operator algebras and the {V}erlinde conjecture},
   JOURNAL = {Commun. Contemp. Math.},
  FJOURNAL = {Communications in Contemporary Mathematics},
    VOLUME = {10},
      YEAR = {2008},
    NUMBER = {1},
     PAGES = {103--154},
      ISSN = {0219-1997,1793-6683},
   MRCLASS = {17B69 (81R10 81T40)},
  MRNUMBER = {2387861},
MRREVIEWER = {Markus\ Rosellen},
       DOI = {10.1142/S0219199708002727},
       URL = {https://doi.org/10.1142/S0219199708002727},
}

@article{JurJPAA,
	Author = {Jurisich, Elizabeth},
	Doi = {10.1016/S0022-4049(96)00142-9},
	Fjournal = {Journal of Pure and Applied Algebra},
	Issn = {0022-4049},
	Journal = {J. Pure Appl. Algebra},
	Mrclass = {17B67 (17B01)},
	Mrnumber = {1600542},
	Mrreviewer = {Chong Ying Dong},
	Number = {1-3},
	Pages = {233--266},
	Title = {Generalized {K}ac-{M}oody {L}ie algebras, free {L}ie algebras and the structure of the {M}onster {L}ie algebra},
	Url = {http://dx.doi.org/10.1016/S0022-4049(96)00142-9},
	Volume = {126},
	Year = {1998},
	Bdsk-Url-1 = {http://dx.doi.org/10.1016/S0022-4049(96)00142-9}}

@article{JLW,
	Author = {Jurisich, E. and Lepowsky, J. and Wilson, R. L.},
	Doi = {10.1007/BF01614075},
	Fjournal = {Selecta Mathematica. New Series},
	Issn = {1022-1824},
	Journal = {Selecta Math. (N.S.)},
	Mrclass = {17B67 (11F22)},
	Mrnumber = {1327230},
	Mrreviewer = {Mirko Primc},
	Number = {1},
	Pages = {129--161},
	Title = {Realizations of the {M}onster {L}ie algebra},
	Url = {http://dx.doi.org/10.1007/BF01614075},
	Volume = {1},
	Year = {1995},
	Bdsk-Url-1 = {http://dx.doi.org/10.1007/BF01614075}}

@article{JurContempMath,
  title={An exposition of generalized Kac-Moody algebras},
  author={Jurisich, Elizabeth},
  journal={Contemporary Mathematics},
  volume={194},
  pages={121--160},
  year={1996},
  publisher={American Mathematical Society}
}

@book{MP,
	Address = {New York},
	Author = {Moody, Robert V. and Pianzola, Arturo},
	Isbn = {0-471-63304-6},
	Mrclass = {17B67 (17-02)},
	Mrnumber = {96d:17025},
	Mrreviewer = {Mirko Primc},
	Note = {A Wiley-Interscience Publication},
	Pages = {xxii+685},
	Publisher = {John Wiley \& Sons Inc.},
	Series = {Canadian Mathematical Society Series of Monographs and Advanced Texts},
	Title = {Lie algebras with triangular decompositions},
	Year = {1995}}

@article{No,
  title={Generalized {M}oonshine},
  AUTHOR = {Norton, Simon},
  journal={Proc. Sympos. Pure Math. },
  volume={47},
  number={1},
  pages={209--210},
  year={1987},
  publisher={AMS}
}

@inproceedings {No2,
    AUTHOR = {Norton, Simon},
     TITLE = {From {M}oonshine to the {M}onster},
 BOOKTITLE = {Proceedings on {M}oonshine and related topics ({M}ontr\'eal,
              {QC}, 1999)},
    SERIES = {CRM Proc. Lecture Notes},
    VOLUME = {30},
     PAGES = {163--171},
 PUBLISHER = {Amer. Math. Soc., Providence, RI},
      YEAR = {2001},
      ISBN = {0-8218-2879-7},
   MRCLASS = {20D08 (11F22)},
  MRNUMBER = {1877765},
MRREVIEWER = {Christopher\ S.\ Simons},
       DOI = {10.1090/crmp/030/14},
       URL = {https://doi.org/10.1090/crmp/030/14},
}

@article{PPV1,
	Adsnote = {Provided by the SAO/NASA Astrophysics Data System},
	Adsurl = {http://adsabs.harvard.edu/abs/2016arXiv160105412P},
	Archiveprefix = {arXiv},
	Author = {{Paquette}, N.~M. and {Persson}, D. and {Volpato}, R.},
	Eprint = {1601.05412},
	Journal = {ArXiv e-prints},
	Keywords = {High Energy Physics - Theory, Mathematics - Number Theory, Mathematics - Representation Theory},
	Month = jan,
	Primaryclass = {hep-th},
	Title = {{Monstrous BPS-Algebras and the Superstring Origin of Moonshine}},
	Year = 2016}

@article {PPV2,
    AUTHOR = {Paquette, Natalie M. and Persson, Daniel and Volpato, Roberto},
     TITLE = {B{PS} algebras, genus zero and the heterotic {M}onster},
   JOURNAL = {J. Phys. A},
  FJOURNAL = {Journal of Physics. A. Mathematical and Theoretical},
    VOLUME = {50},
      YEAR = {2017},
    NUMBER = {41},
     PAGES = {414001, 17},
      ISSN = {1751-8113,1751-8121},
   MRCLASS = {17B67},
  MRNUMBER = {3708102},
       DOI = {10.1088/1751-8121/aa8443},
       URL = {https://doi.org/10.1088/1751-8121/aa8443},
}

@article {Queen,
    AUTHOR = {Queen, Larissa},
     TITLE = {Modular functions arising from some finite groups},
   JOURNAL = {Math. Comp.},
  FJOURNAL = {Mathematics of Computation},
    VOLUME = {37},
      YEAR = {1981},
    NUMBER = {156},
     PAGES = {547--580},
      ISSN = {0025-5718,1088-6842},
   MRCLASS = {20C15 (10D07 20D08)},
  MRNUMBER = {628715},
MRREVIEWER = {Leo\ J.\ Alex},
       DOI = {10.2307/2007446},
       URL = {https://doi.org/10.2307/2007446},
}

@article{Ti87,
	Author = {Tits, Jacques},
	Fjournal = {Journal of Algebra},
	Issn = {0021-8693},
	Journal = {J. Algebra},
	Mrclass = {17B67 (17B20 20G15 20H10)},
	Mrnumber = {873684},
	Mrreviewer = {James F. Hurley},
	Number = {2},
	Pages = {542--573},
	Title = {Uniqueness and presentation of {K}ac-{M}oody groups over fields},
	Url = {https://doi-org.proxy.libraries.rutgers.edu/10.1016/0021-8693(87)90214-6},
	Volume = {105},
	Year = {1987},
	Bdsk-Url-1 = {https://doi-org.proxy.libraries.rutgers.edu/10.1016/0021-8693(87)90214-6}}

@article {vEMS,
    AUTHOR = {van Ekeren, Jethro and M\"oller, Sven and Scheithauer, Nils
              R.},
     TITLE = {Construction and classification of holomorphic vertex operator
              algebras},
   JOURNAL = {J. Reine Angew. Math.},
  FJOURNAL = {Journal f\"ur die Reine und Angewandte Mathematik. [Crelle's
              Journal]},
    VOLUME = {759},
      YEAR = {2020},
     PAGES = {61--99},
      ISSN = {0075-4102,1435-5345},
   MRCLASS = {17B69},
  MRNUMBER = {4058176},
MRREVIEWER = {David\ Ridout},
       DOI = {10.1515/crelle-2017-0046},
       URL = {https://doi.org/10.1515/crelle-2017-0046},
}

\end{document}